\begin{document}

\newtheorem {thm}{Theorem}[section]
\newtheorem{corr}[thm]{Corollary}
\newtheorem {cl}[thm]{Claim}
\newtheorem*{thmstar}{Theorem}
\newtheorem{prop}[thm]{Proposition}
\newtheorem*{propstar}{Proposition}
\newtheorem {lem}[thm]{Lemma}
\newtheorem*{lemstar}{Lemma}
\newtheorem{conj}[thm]{Conjecture}
\newtheorem{question}[thm]{Question}
\newtheorem*{questar}{Question}
\newtheorem{ques}[thm]{Question}
\newtheorem*{conjstar}{Conjecture}

\theoremstyle{remark}
\newtheorem{rem}[thm]{Remark}
\newtheorem{np*}{Non-Proof}
\newtheorem*{remstar}{Remark}

\theoremstyle{definition}
\newtheorem{defn}[thm]{Definition}
\newtheorem*{defnstar}{Definition}
\newtheorem{exam}[thm]{Example}
\newtheorem*{examstar}{Example}
\newtheorem{convention}[thm]{Convention}

\newenvironment{pfcl}{\vspace{.1in}  \noindent {\bf Proof of Claim:}}{\hspace{\fill} $\maltese$ \vspace{.1in}}

\newcommand{\PSL}{\operatorname{PSL}}
\newcommand{\pd}[2]{\frac{\partial #1}{\partial #2}}
\newcommand{\pp}{\partial }
\newcommand{\pdtwo}[2]{\frac{\partial^2 #1}{\partial #2^2}}
\newcommand{\od}[2]{\frac{d #1}{d #2}}
\def\Ind{\setbox0=\hbox{$x$}\kern\wd0\hbox to 0pt{\hss$\mid$\hss} \lower.9\ht0\hbox to 0pt{\hss$\smile$\hss}\kern\wd0}
\def\Notind{\setbox0=\hbox{$x$}\kern\wd0\hbox to 0pt{\mathchardef \nn=12854\hss$\nn$\kern1.4\wd0\hss}\hbox to 0pt{\hss$\mid$\hss}\lower.9\ht0 \hbox to 0pt{\hss$\smile$\hss}\kern\wd0}
\def\ind{\mathop{\mathpalette\Ind{}}}
\def\nind{\mathop{\mathpalette\Notind{}}}
\newcommand{\m}{\mathbb }
\newcommand{\mc}{\mathcal }
\newcommand{\mf}{\mathfrak }
\newcommand{\is}{^{p^ {-\infty}}}
\newcommand{\QQ}{\mathbb Q}
\newcommand{\fh}{\mathfrak h}
\newcommand{\CC}{\mathbb C}
\newcommand{\RR}{\mathbb R}
\newcommand{\ZZ}{\mathbb Z}
\newcommand{\tp}{\operatorname{tp}}
\newcommand{\SL}{\operatorname{SL}}
\newcommand{\UU}{\mathbb U}
\renewcommand{\AA}{\mathbb A}
\newcommand{\GL}{\operatorname{GL}}
\newcommand{\cM}{{\mathcal M}}
\newcommand{\cR}{{\mathcal R}}

\title{Strong minimality and the $j$-function}
\author{James Freitag}
\email{freitag@math.berkeley.edu}

\thanks{JF is partially supported by an NSF MSPRF. TS is partially supported by NSF Grant DMS-1001550 and DMS-1363372.
This material is based upon work supported by the National Science Foundation under Grant No. 0932078 000 while the authors
were in residence at the Mathematical Sciences Research Institute in Berkeley, California, during the Spring 2014 semester.
JF thanks Ronnie Nagloo for discussions about applications of Seidenberg's theorem.  TS thanks
Jonathan Pila for discussions about his Ax-Lindemann-Weierstrass theorem. Both authors thank Barry Mazur for posing the questions which we answer in the last section and for his numerous discussions with us about this work.  The authors thank Anand Pillay for suggesting some improvements and for
discussing issues around the effective finiteness theorem for differential fields}
\author{Thomas Scanlon}
\email{scanlon@math.berkeley.edu}
\address{University of California, Berkeley \\
Department of Mathematics \\
Evans Hall \\
Berkeley, CA 94720-3840 \\
USA}
\maketitle

\begin{abstract}
We show that the order three algebraic differential equation over $\QQ$ satisfied by the
analytic $j$-function defines a non-$\aleph_0$-categorical strongly minimal set with trivial
forking geometry relative to the theory of differentially closed fields of characteristic zero
answering a long-standing open problem about the existence of such sets.   The theorem follows
from Pila's modular Ax-Lindemann-Weierstrass with derivatives theorem using Seidenberg's embedding
theorem and a theorem of Nishioka on the differential equations satisfied by automorphic functions. As a by product of this analysis, we obtain a more general version of the modular Ax-Lindemann-Weierstrass theorem, which, in particular, applies to automorphic functions for arbitrary arithmetic subgroups of $SL_2 (\m Z)$. We then apply the results to prove effective finiteness results for intersections of subvarieties of products of modular curves with isogeny classes. For example, we show that if $\psi:{\mathbb P}^1 \to {\mathbb P}^1$ is any non-identity automorphism of the
projective line and $t \in {\mathbb A}^1(\CC) \smallsetminus {\mathbb A}^1({\mathbb Q}^\text{alg})$, then
the set of $s \in {\mathbb A}^1(\CC)$ for which the elliptic curve with $j$-invariant $s$ is isogenous to the elliptic curve
with $j$-invariant $t$ and the elliptic curve with $j$-invariant $\psi(s)$ is isogenous to the elliptic curve with $j$-invariant
$\psi(t)$ has size at most $36^7$.  In general, we prove that if $V$ is a Kolchin-closed subset of $\m A^n$, then the Zariski closure of the intersection of $V$ with the isogeny class of a tuple of transcendental elements is a finite union of weakly special subvarieties. We bound the sum of the degrees of the irreducible components of this union by a function of the degree and order of $V$. 
\end{abstract}

\section{Introduction}

According to Sacks,  ``[t]he least misleading example of a totally transcendental theory is the theory of differentially closed
fields of characteristic 0 ($\operatorname{DCF}_0$)''~\cite{Sacks}.  This observation has been borne out through the discoveries
that a prime differential field need not be minimal~\cite{notmin}, the theory $\operatorname{DCF}_0$ has the ENI-DOP
property~\cite{DOP}, and Morley rank and Lascar rank differ in differentially closed fields~\cite{MneqU}, amongst others.   However,
the theory of differentially closed fields of characteristic zero does enjoy some properties not shared by all totally transcendental
theories, most notably the Zilber trichotomy holds for its minimal types~\cite{HrSo} and there are infinite definable families of
strongly minimal sets for which the induced structure on each such definable set is $\aleph_0$-categorical and orthogonality between
the fibers is definable~\cite{HrushovskiJou,HrIt}.   Early in the study of the model theory of differential fields, Lascar asked whether
it might be the case that the induced structure on every strongly minimal set orthogonal to the constants must be
$\aleph_0$-categorical~\cite{Lascar} (Lascar's formulation of the question was slightly different, though implies the condition we stated. Also, Lascar attributed the question to Poizat, but the question does not appear in the paper to which Lascar refers.).  From the existence of Manin kernels, one knows that there are strongly minimal sets relative to
$\operatorname{DCF}_0$ which are not $\aleph_0$-categorical~\cite{HrML}, but the question of whether there are non-$\aleph_0$-categorical
strongly minimal sets with trivial forking geometry has remained open~(see \cite{RosenJou} for instance).  We exhibit an explicit equation defining
a set with such properties.

The analytic $j$-function, $j:\fh \to \CC$, which has been known to mathematicians for quite some time, appearing implicitly in the
work of Gauss already in the late Eighteenth Century~\cite{Gaussisold}, satisfies a differential equation over
$\QQ$  which when evaluated in a differentially closed field defines a non-$\aleph_0$-categorical strongly minimal set
with trivial forking geometry. The specific differential equation satisfied by the $j$-function is given by the vanishing of a differential rational function. In addition to the fiber of the function above zero (the equation of the $j$-function), we prove that the other fibers are also strongly minimal, trivial, and pairwise orthogonal. 

Besides the applications to differential algebraic geometry, we give some number theoretic applications. Specifically, we use our differential algebraic approach to prove effective bounds on the size of the intersection of Hecke orbits of transcendental points on products of modular curves with non-weakly special varieties.

Mazur posed some effective finiteness questions in connection with a recent theorem of Orr~\cite{orr2012families}
and his program to find very compact invariants for elliptic curves.  Of course, knowing the isogeny class of an elliptic curve determines
that curve only to within a countably infinite set.  Mazur surmised that the data of the isogeny class of an elliptic curve $E$ and of the
isogeny class of some other naturally associated (but not so naturally associated as to respect the Hecke correspondences) elliptic curve
might pin down $E$ or at least constrain it to a finite set.   In fact, it is a consequence of the main theorem of~\cite{orr2012families} that
if $C \subseteq {\mathbb A}^2_\CC$ is an irreducible affine plane curve which is not modular or horizontal or vertical,
then for any point $(a,b) \in C(\CC)$ there
are only finitely many other points $(c,d) \in C(\CC)$ for which the elliptic curve coded by $a$ is isogenous to the elliptic curve given by $c$ and
the curve corresponding to $b$ is isogenous to that coded by $d$.   In this sense, if we regard $C$ as a correspondence  which 
associates to an elliptic curve $E$ with $j$-invariant $a$ one of the elliptic curves having $j$-invariant $b$ with $(a,b) \in C(\CC)$, then
the data of the isogeny class of $E$ and of the $C$-associated elliptic curve determined $E$ up to a finite set.

Orr's theorem applies to arbitrary points without any hypothesis on the degree of the point over $\QQ$.  However,
this generality incurs a cost in that his argument follows the Pila-Zannier strategy for proving diophantine geometric
finiteness theorems which depends in an essential way on ineffective results in the Pila-Wilkie o-minimal
counting theorem and in class field theory.
On the other hand, by restricting attention to transcendental points, we may compute explicit bounds on the sizes of these finite sets.
While our proof that the sets in question are finite also passes through the Pila-Wilkie o-minimal counting theorem in the guise
of Pila's modular Ax-Lindemann-Weierstrass theorem with derivatives, this appeal does not leave a trace of ineffectivity.

Let us describe the basic tactics involved in our approach to the general problem.
The key point is to replace the Hecke orbits by solutions sets to particular differential equations.
This approach already appears in Buium's article~\cite{buiumIII}.
 The obvious downside to this move is that (as referenced above) the Kolchin (differential Zariski) topology has wild behavior
 compared to the Zariski topology. This is mitigated by our model theoretic work understanding the differential equation satisfied
 by the $j$-function. Here we use the strong minimality and triviality of the differential equation which the $j$-function satisfies
 in order to establish the finiteness of its intersection with non-weakly special algebraic varieties. The advantage of this
 approach is uniformity - we replace an arithmetic object by a (differential) variety. The finiteness of certain intersections
 then follows by our proof of strong minimality, and the actual bounds come from an effective version of uniform bounding
 for definable sets in differential fields due to Hrushovki and Pillay \cite{HPnfcp}; essentially these bounds come from
 doing intersection theory (of algebraic varieties) in jet spaces of algebraic varieties. The actual bounds are rather
  tractable, being doubly exponential in the various inputs - the degrees and dimensions of certain associated algebraic varieties.

This paper is organized as follows.  In Section~\ref{basicthy}, we recall some of the basic theory of the $j$-function, including
the theory of the Schwarzian derivative and the differential equation satisfied by $j$.   In Section~\ref{Nishi} we recount a theorem
of Nishioka showing that the $j$-function does not satisfy any nonzero algebraic differential equations over $\CC$ of order
two or less.  With Section~\ref{minsect} we complete the proof of our main theorem and draw some corollaries.   The main ingredients
of the proof, in addition to Nishioka's theorem, are Seidenberg's embedding theorem, Pila's modular Ax-Lindemann-Weierstrass with
derivatives theorem and a construction a nonlinear order three differential rational operator $\chi$ for which 
$\chi(j) = 0$.  In Section~\ref{otherfibers}, we show via a change of variables trick and some basic forking calculus that 
for any parameter $a$ the set defined by $\chi(x) = a$ is strongly minimal.  In Section~\ref{orthosect} we show that these
fibers are orthogonal.  
The final section is devoted to arithmetic applications, where we use our main theorem to get bounds on the intersections of non-weakly special varieties with Hecke orbits in products of modular curves.

\section{Basic theory of the $j$-function} \label{basicthy}

In this section we summarize some of the basic theory of the $j$-function.

We denote the upper half-plane by
$$
\fh := \{ z \in \CC ~:~ \operatorname{Im}(z) > 0 \} \text{ .}
$$

We write $t$ for the variable ranging over $\fh$ (or some open subdomain).

The $j$-function is an analytic function on $\fh$ whose Fourier
expansion begins with
$$
j(t) = \exp(-2 \pi i t) + 744 + 196~884 \exp(2 \pi i t) + 21~493~760 \exp(4 \pi i t) + \cdots
$$

The algebraic group $\SL_2(\CC)$ acts on the projective line via linear fractional transformations and the restriction
of this action to $\SL_2(\RR)$ induces an action of $\SL_2(\RR)$ on $\fh$.  The $j$-function is
a modular function for $\operatorname{SL}_2(\ZZ)$ in the sense that $j(\gamma \cdot t) = j(t)$ for
each $\gamma \in \operatorname{SL}_2(\ZZ)$.  Indeed, more is true:  for $a, b \in \fh$ one has $j(a) = j(b)$ if and only if there
is some $\gamma \in \SL_2(\ZZ)$ with $\gamma \cdot a = b$.

The differential equation satisfied by $j$ is best expressed using the Schwarzian derivative.   We shall write $x'$ for $\pd{x}{t}$.
More generally, in any differential ring $(R,\partial)$ we shall write $x'$ for $\partial(x)$.
We define the Schwarzian by

$$
S(x)=\left( \frac{x''}{x'}\right) '-\frac{1}{2} \left(\frac{x''}{x'}\right)^2 \text{ .}
$$

When dealing with the Schwarzian derivative associated with a particular derivation $\partial$, we will use the notation $S_\partial$, but when $\partial$ is fixed or clear from the context, we will drop the subscript. 

The Schwarzian satisfies a chain rule:

$$
S(f \circ g) = (g')^2 S(f) \circ g  + S(g) \text{ .}
$$

A characteristic feature of the Schwarzian is that if $(K,\partial)$ is a differential field of characteristic zero with
field of constants $C = \{ x \in K ~:~ x' = 0 \}$ and $f$ and $g$ are two elements of $K$, then 
one has $S(f) = S(g)$ if and only if $f = \frac{ag + b}{cg + d}$ for some constants $a$, $b$, $c$ and $d$.  In particular,
one computes immediately from the formula for the Schwarzian that if $z' = 1$, then $S(z) = 0$ so that 
the solutions to the equation $S(x) = 0$ are precisely the degree one rational functions in $z$ with 
coefficients from $C$.

The following is an order three algebraic differential equation satisfied by $j$ (see~\cite[page 20]{masserheights}).

$$E:= S(y) + R(y) (y')^2 = 0 \text{ ,}$$

where $$R(y) = \frac{y^2 - 1968 y + 2~654~208}{2 y^2 (y - 1728)^2} \text{ .}$$

For the remainder of this paper, when we speak of the differential equation satisfied by $j$, we mean $E$.
We will also make use of the differential rational function which gives the equation; throughout the paper, we will denote
$$\chi (y) : = S(y) + R(y) (y')^2 \text{ .}$$

Similarly, when there is some ambiguity or choice about the particular derivation 
with which we are working, we will write $\chi_\delta$ for the resulting differential rational function.

\section{Nishioka's method and automorphic functions} \label{Nishi}
Nishioka~\cite{Nishioka} proved a conjecture of Mahler~\cite{mahler1969algebraic} regarding automorphic
functions and the differential equations that they might satisfy. In this section, we explain Nishioka's
result and review its proof here, noting certain necessary uniformities.

Let us recall the notion of an automorphic function, specifically adapted to Nishioka's method.  In our application, we take
$G = \operatorname{SL}_2({\mathbb Z})$, $D = \fh$  and $f = j:\fh \to \CC$ the
analytic $j$-function.  In this section we shall write $t$ for the variable ranging over $D$ and $T$ for the variable ranging
over $G$.

\begin{defn} Let $G \leq \operatorname{SL}_2 (\m C)$ be a subgroup.  A function $f(t)$ which is analytic on some domain $D \subset \m P^1$ is called automorphic if it satisfies the following properties:
\begin{enumerate}
\item For all $T \in G$ and all $t \in D$ one has $Tt \in D$.
\item $f(Tt) = f(t)$ for $t \in D$.
\end{enumerate}
\end{defn}

The main theorem of~\cite{Nishioka} is the following.

\begin{thm}  \label{NishiokaSpecial} Let $G$ be a Zariski dense subgroup of $\operatorname{SL}_2 ( \m C)$.
Then any nontrivial automorphic function of $G$ satisfies no algebraic differential equation of order two or less over $\m C $.
\end{thm}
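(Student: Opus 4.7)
The plan is to argue by contradiction: suppose that $f$ is a nontrivial automorphic function for $G$ satisfying a polynomial relation $P(f,f',f'')=0$ with $P \in \mathbb{C}[Y_0,Y_1,Y_2]$ nonzero, and derive a contradiction by exploiting the invariance of $f$ under the abundance of transformations provided by Zariski density.

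First I would compute, by direct application of the chain rule to $f(Tt)=f(t)$ with $T = \left(\begin{smallmatrix} a & b \\ c & d \end{smallmatrix}\right)$ and $(Tt)'=(ct+d)^{-2}$, $(Tt)''=-2c(ct+d)^{-3}$, the transformation laws
\[
f'(Tt) = (ct+d)^2 f'(t), \qquad f''(Tt) = (ct+d)^4 f''(t) + 2c(ct+d)^3 f'(t).
\]
Substituting into $P(f(Tt), f'(Tt), f''(Tt))=0$, we obtain
\[
P\bigl(f(t),\, (ct+d)^2 f'(t),\, (ct+d)^4 f''(t) + 2c(ct+d)^3 f'(t)\bigr) = 0
\]
for every $T \in G$. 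Since the left-hand side depends polynomially on the entries of $T$ and vanishes on the Zariski-dense subgroup $G$, it vanishes identically on $\operatorname{SL}_2(\mathbb{C})$, hence for all $(c,d) \in \mathbb{C}^2$ (with $a,b$ chosen so that $ad-bc=1$, which does not enter the expression).

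Next I would fix some $t_0$ in the domain at which $f'(t_0) \neq 0$, which exists because $f$ is non-constant. Setting $X := ct_0+d$, the pair $(c,X)$ ranges freely over $\mathbb{C}^2$ as $(c,d)$ does, so the identity
\[
P\bigl(f(t_0),\, f'(t_0) X^2,\, f''(t_0) X^4 + 2c f'(t_0) X^3\bigr) = 0
\]
holds for all $(c,X)\in \mathbb{C}^2$. For each fixed $X\neq 0$, varying $c$ makes the third entry traverse all of $\mathbb{C}$, which forces $P(f(t_0),\, f'(t_0)X^2,\, Y_2)=0$ for all $Y_2$; then varying $X$ and using $f'(t_0)\neq 0$ forces $P(f(t_0), Y_1, Y_2)=0$ for all $Y_1,Y_2 \in \mathbb{C}$. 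Since $f$ is nonconstant and analytic, $f$ takes infinitely many values as $t$ varies over any open subdomain where $f'$ is nonzero, so we obtain $P(Y_0,Y_1,Y_2) \equiv 0$, a contradiction.

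The main subtlety is the bookkeeping for the Zariski-density argument: one must verify that the two parameters $c$ and $d$ of $T$ (after the constraint $ad-bc=1$) are genuinely free, and that $(c,X)=(c,ct_0+d)$ really gives an independent pair at the chosen $t_0$. Once the parameters are separated, the polynomial manipulation is elementary; so the key content of the argument is Zariski density, and the only analytic ingredient is that a nonconstant analytic function has nonvanishing derivative somewhere and takes infinitely many values.
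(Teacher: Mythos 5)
Your proof is correct, and it takes a genuinely different route from the paper. The paper substitutes $Tt$ for $t$, clears denominators to obtain a nonzero polynomial over $\mathbb{C}(t)$ vanishing on the triple $\bigl(Tt, (Tt)', (Tt)''\bigr)$ for all $T \in G$, and then invokes Lemma~3 of Nishioka's paper as a black box; indeed, the delicate point there (which the paper explicitly flags as needing repair in Nishioka's original) is the uniformity of the resulting polynomial in $T$. You avoid that lemma entirely: since your relation $P(f,f',f'')=0$ has constant coefficients, after substituting the transformation laws the entire dependence on $T$ sits polynomially in $(c,d)$, so Zariski density of $G$ (via the dominant projection $\operatorname{SL}_2 \to \mathbb{A}^2$ onto the bottom row) lets you promote the identity to all of $\mathbb{C}^2$ and then separate variables with the substitution $X = ct_0+d$; the step where varying $c$ at fixed $X\neq 0$ sweeps out the $Y_2$-slot, then varying $X$ sweeps out $Y_1$, then varying $t_0$ sweeps out $Y_0$, is airtight given $f'(t_0)\neq 0$ and the nonconstancy of $f$. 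What you lose relative to the paper's argument is the stronger conclusion it actually establishes, namely that $t, f, f', f''$ are algebraically independent over $\mathbb{C}$ (equivalently, no order-two equation over $\mathbb{C}(t)$): with $t$ allowed in $F$, the first slot becomes $Tt = (at_0+b)/(ct_0+d)$, which depends on $a,b$ as well, and your clean separation of variables no longer applies directly. Since the theorem as stated only asserts independence over $\mathbb{C}$ (and the paper recovers the $\mathbb{C}(t,e^{2\pi i t})$ version later from strong minimality anyway), your proof fully suffices for the stated result, and is more elementary and self-contained. Two points worth a sentence in a polished write-up: for a fixed $t_0$ one should note that the matrices $T\in G$ with $ct_0+d=0$ (where the formulas degenerate) form a proper closed condition, so discarding them preserves Zariski density; and one should say explicitly that ``nontrivial'' is being read as ``nonconstant,'' which is what supplies both $f'(t_0)\neq 0$ and the infinitude of values of $f$.
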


\begin{rem}
In~\cite{Nishioka} the hypothesis in Theorem~\ref{NishiokaSpecial} is that $G$ have at least three limit points rather than that it be Zariski dense
in $\operatorname{SL}_2({\mathbb C})$, but  these conditions are equivalent as is noted in~\cite{Nishioka}.
\end{rem}

\begin{rem}
The conclusion of Theorem~\ref{NishiokaSpecial} is ostensibly stronger in that $f$ 
does not satisfy low order differential equations even
over ${\mathbb C}(t,\exp(2 \pi i t))$.   The inclusion of these additional parameters is a red herring. 
We explain in Remark~\ref{Nishiokagen} 
how such independence follows from the minimality of $\operatorname{tp}(j/{\mathbb C})$.
\end{rem}

There are some minor errors in Nishioka's proof of Theorem~\ref{NishiokaSpecial} \cite[page 46]{Nishioka}.  The first is very slight:
there is a misplaced reference to Lemma~4 of his paper (which should be to Lemma~3).  Somewhat more seriously,
the necessary uniformity of the algebraic dependence in the first half of his proof is not mentioned at all.  For the sake
of completeness, we reproduce his proof with these defects remedied.

\begin{proof} Let $f(t)$ be automorphic for $G$. Assume that $t, f(t) , \od{}{t} f(t) , \od{^2}{t^2} f(t)$ are algebraically dependent over $ \m C$; specifically, there is some nonzero polynomial $F$ with constant coefficients for which $$F(t, f(t) , \od{}{t} f(t) , \od{^2}{t^2} f(t))=0 \text{ .}$$ Then for each $T \in G$, the same is true when we substitute the variable $Tt$ for $t$. Now our four functions have become:
\begin{enumerate}
\item $Tt$
\item $f(Tt) = f(t)$
\item $\od{}{Tt} f(Tt) = (\od{}{t} Tt ) ^{-1} \od {}{t} f(t)$
\item $(\od{}{Tt})^2 f(Tt) = (\od {}{t} Tt )^{-2} \od {^2}{t^2}f(t) - (\od{}{t} Tt)^{-3} (\od{^2}{t^2}Tt) \od{}{t} f(t)$.
\end{enumerate}
So, by our earlier remarks, we have that
$$F(Tt, f(t) , (\od{}{t} Tt ) ^{-1} \od {}{t} f(t) , (\od {}{t} Tt )^{-2} \od {^2}{t^2}f(t) - (\od{}{t} Tt)^{-3} (\od{^2}{t^2}Tt) \od{}{t} f(t))=0 \text{ .}$$
By clearing denominators, we obtain a nonzero polynomial over  ${\mathbb C}(t)$ which vanishes on the
triple $(Tt, \od{}{t} Tt, \od{^2}{t^2} Tt)$ for all $T \in G$.  This violates Lemma~3 of~\cite{Nishioka}.
\end{proof}

\begin{rem}\label{Nishiokagen} Nishioka actually proves a slightly stronger conclusion; namely, that the automorphic function satisfies no differential equations of order two over $\m C ( t, e^t)$. In this paper, we will be essentially concerned with the $j$-function (and automorphic functions for other arithmetic subgroups), where this part of the theorem will be an easy consequence of strong minimality.
\end{rem}

\section{Minimality and the $j$-function} \label{minsect}

In this section, we deduce our main theorem on the strong minimality of the
set defined by $E$.   Model theoretic notation is standard and generally follows that
of~\cite{GST}.   We regard the differential field $\CC \langle j \rangle = \CC(j,j',j'')$
as a subdifferential field of some differentially closed field with field of constants $\CC$.

Let us recall Seidenberg's embedding theorem~\cite{seidenberg}.

\begin{thm} \label{Seidenberg} Let $K = \m Q \langle u_1 , \ldots , u_n \rangle$ be a differential field
generated by $n$ elements over $\QQ$ and let $K_1 = K \langle v \rangle$ be
a simple differential field extension of $K$.  Suppose $U \subset \m C$ is an open ball and $\iota : K \rightarrow \cM(U)$
is a differential field embedding of $K$ into the differential field of meromorphic functions on $U$.
Then there is an open ball $ V \subseteq U$ and an extension of $ \iota $ to a differential field embedding of $K_1$ into $\cM(V)$.
\end{thm}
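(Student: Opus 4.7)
The plan is to split the argument into two cases depending on whether the new generator $v$ is differentially algebraic or differentially transcendental over $K$, with the algebraic case carrying the real content. Throughout we are free to shrink $U$, so we may assume from the start that all the $\iota(u_i)$ are holomorphic on $U$ and that any finite list of nonzero elements of $\iota(K)$ we encounter does not vanish identically.

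In the differentially transcendental case, the image $\iota(K) \subseteq \mathcal{M}(U)$ is a countable differential subfield (since $K$ is finitely generated over $\mathbb{Q}$). A cardinality or lacunary construction then produces a holomorphic function $g$ on some smaller ball $V \subseteq U$ whose successive derivatives $g, g', g'', \ldots$ are algebraically independent over $\iota(K)$; sending $v \mapsto g$ yields the desired extension.

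In the differentially algebraic case, fix a differential polynomial $P(y) \in K\{y\}$ of minimal order $m$ annihilating $v$. Minimality guarantees that $v, v', \ldots, v^{(m-1)}$ are algebraically independent over $K$ and that the separant $s_P := \partial P/\partial y^{(m)}$ does not vanish at $v$. Shrink $U$ so that the coefficients of $\iota(P)$ are all holomorphic and $\iota(s_P) \not\equiv 0$, pick $t_0 \in U$ at which $\iota(s_P)$ is nonzero, and then choose initial data $(a_0, \ldots, a_{m-1}) \in \mathbb{C}^m$ lying outside the countable union of proper algebraic hypersurfaces defined by the nonzero polynomials in $\iota(K)[y_0, \ldots, y_{m-1}]$, while keeping the value of $\iota(s_P)$ at $(t_0, a_0, \ldots, a_{m-1})$ nonzero. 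The implicit function theorem then rewrites $\iota(P)(y) = 0$ locally as an analytic ordinary differential equation $y^{(m)} = F(t, y, y', \ldots, y^{(m-1)})$, and the Cauchy existence theorem yields a holomorphic solution $\tilde{v}$ on some ball $V \ni t_0$ contained in $U$ with $\tilde{v}^{(i)}(t_0) = a_i$.

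The main obstacle is checking that the assignment $v \mapsto \tilde{v}$ really defines a differential field embedding, i.e.\ that no differential polynomial over $\iota(K)$ annihilates $\tilde{v}$ beyond those forced by $\iota(P) = 0$. Algebraic independence of $\tilde{v}, \tilde{v}', \ldots, \tilde{v}^{(m-1)}$ as meromorphic functions on $V$ propagates from their pointwise independence at $t_0$: any nonzero $Q \in \iota(K)[y_0, \ldots, y_{m-1}]$ vanishing identically on the tuple of functions would, when evaluated at $t_0$, violate our choice of initial data. A standard characteristic-set argument, using that $\iota(s_P)(\tilde{v})$ is nonzero near $t_0$ by continuity, then shows that the differential ideal of $\iota(K)\{y\}$ vanishing at $\tilde{v}$ is exactly the one cut out by $\iota(P)$ after inverting $\iota(s_P)$, which matches the ideal vanishing at $v$ in $K\{y\}$. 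This identification gives the required differential embedding $K_1 \hookrightarrow \mathcal{M}(V)$.
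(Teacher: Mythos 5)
This is Seidenberg's embedding theorem, which the paper does not prove at all: it is recalled with a citation to \cite{seidenberg} and used as a black box. So there is no in-paper proof to compare against; what you have written is, in outline, Seidenberg's own argument (induction on generators, with the differentially algebraic case handled by realizing a sufficiently generic nonsingular initial condition for the minimal annihilating polynomial and invoking the Cauchy existence theorem for analytic ODEs, then identifying the two defining differential ideals via the general component $\{P\}:s_P^\infty$). That is the right route, and the skeleton is sound.

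Two places in your sketch are too glib to count as a proof. First, in the differentially transcendental case, a ``cardinality'' argument does not work as stated: for a fixed nonzero $Q\in\iota(K)\{y\}$ the set of holomorphic solutions of $Q(g)=0$ on $V$ can itself have the cardinality of the continuum (e.g.\ $y'=0$), so you cannot avoid countably many such sets by counting. The standard fix is to build $g$ as a power series at a point $t_0$ whose Taylor coefficients are algebraically independent over the countable field generated by the Taylor coefficients of $\iota(K)$ at $t_0$ (with moduli decaying fast enough for convergence), and to check that some Taylor coefficient of $Q(g)$ is then a nonzero polynomial in those coefficients. Second, in the algebraic case your nonsingularity bookkeeping is off: minimal \emph{order} alone does not force $s_P(v)\neq 0$ (you need $P$ minimal in the lexicographic sense of order then degree in $y^{(m)}$, or irreducible), and since $s_P=\partial P/\partial y^{(m)}$ generally still involves $y^{(m)}$, you must also choose $a_m$ as a \emph{simple} root of $P(t_0,a_0,\ldots,a_{m-1},Y)$ before the implicit function theorem applies; your text evaluates the separant only at $(t_0,a_0,\ldots,a_{m-1})$. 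Finally, be aware that the deferred ``standard characteristic-set argument'' is where the real content lives -- one needs Ritt's theorem that a zero of an irreducible $P$ at which the separant does not vanish and whose first $m$ coordinates are algebraically independent over the base is a generic zero of the general component $\{P\}:s_P^\infty$, so that $K\langle v\rangle\cong\iota(K)\langle\tilde v\rangle$ over $\iota$. With those points supplied, the argument is complete.
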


Let us recall a basic principle in stability theory, called the ``Shelah reflection principle'' in~\cite{ChHr-AD1}.
In a stable theory, if $A \subseteq B$ is an extension of subsets of some model and $a$ is any tuple,
then if $\tp(a/B)$ forks over $A$, one can find a canonical base for $\tp(a/B)$ within the algebraic closure
of an initial segment of a Morley sequence in $\tp(a/B)$, $\{ d_i \} _{i \in I}$.
In a superstable theory, the initial segment is finite. Specifically, this implies that the (still indiscernible over $A$) sequence $\{ d_i \}_{i = 1}^n$
is not independent over $A$. A proof of this principle in the more general context of simple theories may be found in \cite[Proposition 17.24]{Casanovas}. A proof in the stable case may be found in \cite[Lemma 2.28]{GST}.

\begin{lem} \label{RLEM} We can realize any finite indiscernible sequence in  $\tp ( j (t )) / \m C)$ via $\{j(g_i t ) \}$ where $g_i \in GL_2 (\m C)$.
\end{lem}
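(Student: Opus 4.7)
The plan is to realize the indiscernible sequence as meromorphic functions on a simply connected open subset of $\fh$ via Seidenberg's theorem, and then identify each resulting meromorphic solution of $\chi(y)=0$ as a pullback of $j$ by a M\"obius transformation, using a local-to-global analytic argument based on the Schwarzian chain rule.

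In more detail, suppose $(a_i)_{i=1}^n$ is a finite indiscernible sequence in $\tp(j(t)/\m C)$, so each $a_i$ realizes this type and in particular satisfies $\chi(a_i)=0$. I would apply Seidenberg's theorem (Theorem~\ref{Seidenberg}), iterated so as to adjoin the $a_i$'s one at a time while extending the identity inclusion of $\m C$ as the constant field of $\cM(V)$, to obtain a $\m C$-linear differential embedding $\iota : \m C\langle a_1, \ldots, a_n\rangle \to \cM(V)$ for some simply connected open $V \subset \fh$. Writing $\tilde a_i := \iota(a_i)$, each $\tilde a_i$ is a meromorphic function on $V$ satisfying $\chi(\tilde a_i)=0$, and is non-constant because $\tp(j/\m C)$ is non-algebraic by Nishioka's theorem (Theorem~\ref{NishiokaSpecial}).

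The crux is the following analytic claim: any non-constant meromorphic solution $y$ of $\chi(y)=0$ on a simply connected $V \subset \fh$ is of the form $y = j\circ g$ for a single $g \in \GL_2(\m C)$. To prove it I would choose a base point $t_0 \in V$ at which $y$ is holomorphic with $y'(t_0)\neq 0$ and $y(t_0)\notin\{0,1728\}$; near such a $t_0$ a single-valued local inverse branch of $j$ is defined on $y(W)$ for some open $W\ni t_0$, and setting $h := j^{-1}\circ y$ on $W$ yields a factorization $y = j\circ h$ there. Using the Schwarzian chain rule $S(j\circ h)=(h')^2\,S(j)\circ h + S(h)$ together with $\chi(j)=0$, a direct computation gives $\chi(j\circ h) = S(h)$, so $S(h)=0$ and hence $h$ is M\"obius on $W$. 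Analytic continuation along paths in the simply connected $V$ then propagates the identity $y = j\circ h$ across $V$, producing a global $g \in \GL_2(\m C)$ with $y = j\circ g$ on $V$ (after shrinking $V$ so that $g_i(V) \subset \fh$ for each of the finitely many $g_i$ that arise).

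Applying the claim to each $\tilde a_i$ yields $g_i \in \GL_2(\m C)$ with $\tilde a_i = j(g_i t)$. Since $\iota$ is a $\m C$-linear differential isomorphism onto its image, the tuple $(j(g_1 t),\ldots,j(g_n t))$ has the same type over $\m C$ as $(a_1,\ldots,a_n)$, and in particular realizes the given indiscernible sequence. The main obstacle is the analytic step: propagating the local M\"obius factorization of $y$ through a possibly ramified cover to a single globally defined $g$, which is handled by the simple-connectedness of $V$ and the rigidity of M\"obius transformations under analytic continuation.
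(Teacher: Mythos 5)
Your proposal is correct and follows essentially the same route as the paper: embed the sequence into meromorphic functions via Seidenberg, lift each solution through $j$, and use the Schwarzian chain rule to compute $\chi(j\circ h)=S(h)=0$, concluding that $h$ is a M\"obius transformation. The only difference is that you justify the existence of the holomorphic lift $h$ carefully via local inverse branches and analytic continuation on a simply connected domain, whereas the paper asserts it directly from the surjectivity of $j$.
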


\begin{rem}
The first author discussed portions of this argument with Ronnie Nagloo, who made several essential suggestions.
\end{rem}

\begin{proof}
 By using Theorem~\ref{Seidenberg}, we may assume that the initial segment of the Morley sequence,
 from which we extract the canonical base $\{d_1, \ldots , d_n \}$ is embedded in the field of meromorphic
 functions on some open domain $U$ contained in $\fh$. Since the $j$-function is a surjective
 analytic function from $\fh \rightarrow \m C$, it follows that
 there are holomorphic functions $\psi_i: U \rightarrow \fh$ such that $j( \psi_i (t)) = d_i (t)$.
 Of course, we know $j( \psi_i (t))$ satisfies the same differential equation as $j(t)$, namely,
\begin{eqnarray*}
0 & = &  \chi (j \circ  \psi_i ) \\
   &=& S(j \circ \psi_i) + R( j \circ \psi_i ) ( (j \circ \psi_i)')^2  \\
  & =& (S(j) \circ   \psi_i ) \cdot (\psi_i')^2 + S(\psi_i)+R( j \circ \psi_i) (j' \circ \psi_i)^2 \cdot ( \psi_i') ^2  \\
& = & (\chi (j) \circ \psi_i ) \cdot (\psi_i' )^2 + S (\psi_i)  \\
& = & S(\psi_i) \text{ .}
\end{eqnarray*}

So, we can see that if $j \circ \psi _i$ is a solution to $\chi (x) =0$, then
$S(\psi_i)=0$.  As we noted above, all such solutions are rational functions of degree one.
That is, $j(\psi_i (t))=j(g_i t)$ where $g_i \in \operatorname{GL}_2 (\m C)$.

\end{proof}

With the next theorem we deduce from Pila's module Ax-Lindemann-Weierstrass with derivatives theorem that $\tp(j/\CC)$ is minimal.

\begin{thm}\label{Urank1} $\operatorname{RU} (\tp (j /\m C))=1$.
\end{thm}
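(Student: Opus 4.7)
The plan is to prove $\operatorname{RU}(\tp(j/\m C)) = 1$ by contradiction, combining the Shelah reflection principle, Lemma~\ref{RLEM}, Pila's modular Ax-Lindemann-Weierstrass theorem with derivatives, and indiscernibility of Morley sequences. Since $j \notin \m C$, the lower bound $\operatorname{RU}(\tp(j/\m C)) \geq 1$ is immediate. Suppose for contradiction that $\operatorname{RU}(\tp(j/\m C)) \geq 2$; then there is a parameter set $B \supseteq \m C$ such that $\tp(j/B)$ is a non-algebraic forking extension of $\tp(j/\m C)$. Take an infinite Morley sequence $(d_i)_{i<\omega}$ in $\tp(j/B)$ over $B$. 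It is indiscernible over $\m C$, its terms are pairwise distinct (as the type is non-algebraic), and by Shelah reflection some finite initial segment $(d_1,\ldots,d_n)$ is not independent over $\m C$. By Theorem~\ref{NishiokaSpecial} each $\m C\langle d_i\rangle$ has transcendence degree $3$ over $\m C$ with basis $\{d_i, d_i', d_i''\}$ (the relation $\chi(d_i) = 0$ expresses $d_i'''$ rationally in these), so the failure of independence reads
\[
\operatorname{tr.deg}_{\m C}\,\m C(d_1, d_1', d_1'', \ldots, d_n, d_n', d_n'') < 3n \text{ .}
\]

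By Lemma~\ref{RLEM} we may realize $d_i(t) = j(g_i t)$ in the differential field of meromorphic functions on some open $U \subseteq \fh$, with $g_i \in \GL_2(\m C)$. Since $d_i', d_i''$ differ from $j'(g_i t), j''(g_i t)$ only by factors in $\m C(t)$, the displayed inequality (after adjoining $t$ as one extra transcendental) becomes a nontrivial algebraic relation over $\m C(t)$ among the $3n$ functions $j(g_i t), j'(g_i t), j''(g_i t)$. Pila's modular Ax-Lindemann-Weierstrass theorem with derivatives then forces the parametric curve $\{(g_1 t, \ldots, g_n t) : t \in U\} \subset \fh^n$ to lie in a proper weakly special subvariety of $\fh^n$; projecting to a pair of coordinates in which weak speciality is witnessed produces indices $i \neq i'$ and a matrix $\gamma \in \GL_2^+(\m Q)$ with $g_i = \gamma g_{i'}$.

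Because the $d_k$ are pairwise distinct, $\gamma$ cannot lie in $\operatorname{SL}_2(\m Z)$ (up to scalar), so the classical modular polynomial relation $\Phi_N(d_i, d_{i'}) = 0$ holds for some $N > 1$. Indiscernibility of $(d_i)_{i<\omega}$ over $\m C$ propagates this relation to every pair of indices; in particular $\Phi_N(d_1, d_l) = 0$ for every $l \geq 2$. But $\Phi_N(d_1, Y)$ is a nonzero polynomial of finite degree in $Y$, so $\{d_l : l \geq 2\}$ is finite, contradicting pairwise distinctness of the Morley sequence once its length exceeds $\deg_Y \Phi_N + 1$. The delicate step will be the appeal to Pila's theorem, which is most commonly stated for $g_i \in \GL_2^+(\m R)$: one must either use a version applicable to arbitrary complex Möbius transformations or preprocess the Seidenberg embedding so that each $g_i$ can be taken in $\GL_2^+(\m R)$.
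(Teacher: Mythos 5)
Your proof follows the same overall strategy as the paper's: reduce via the Shelah reflection principle to a dependent finite initial segment of a Morley sequence, realize it as $j(g_i t)$, $g_i \in \GL_2(\m C)$, using Lemma~\ref{RLEM}, and invoke Pila's modular Ax--Lindemann--Weierstrass theorem with derivatives to force a modular relation between two of the $g_i$. The difference is in the final step: the paper closes with the abstract observation that terms of a Morley sequence can be interalgebraic over the base only if the type is algebraic, whereas you make the interalgebraicity concrete, using indiscernibility over $\m C$ to propagate $\Phi_N(d_1, d_l) = 0$ to every $l \geq 2$ and then contradicting pairwise distinctness via the finite degree of $\Phi_N(d_1,Y)$. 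Both endings are valid; yours is more elementary but longer, and imports an appeal to Nishioka's theorem to pin down $\operatorname{tr.deg}_{\m C}\m C\langle d_i\rangle = 3$, which the paper only needs for the strong minimality upgrade (here the $n=1$ case of Pila's theorem already gives it). Your closing caveat about $\GL_2(\m C)$ versus $\GL_2^+(\m R)$ is a genuine observation: Lemma~\ref{RLEM} only produces complex M\"obius parameters, while Pila's theorem is stated for the action on $\fh$, and this normalization is left implicit in the paper as well, so it deserves to be resolved in a careful write-up rather than just flagged.
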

\begin{proof} By the stability of $\operatorname{DCF}_0$, we 
may find a countable algebraically closed subfield $A \subseteq {\mathbb C}$  
for which $\operatorname{RU}(j/A) = \operatorname{RU}(j/{\mathbb C})$. 

We need to check that any forking extension of $\tp( j / A)$ is algebraic.  By the finite character of forking, 
it suffices to consider extensions of the type to finitely generated extensions of $A$. 
If $B \supseteq A$ is any such finitely generated differential field
extension in our differentially closed field for which $\tp(j/B)$ forks over
$A$, then by the Shelah reflection principle mentioned above, we may find a finite Morley sequence
$\{ d_1, \ldots, d_n \}$ in $\tp(j/B)$ which is not independent over $A$.  By Lemma~\ref{RLEM}, we may realize $d_1, \ldots, d_n$
as $j(g_1 t), \ldots, j(g_n t)$ for some $g_i \in \operatorname{GL}_2(\CC)$.  The modular Ax-Lindemann-Weierstrass with derivatives
theorem  of~\cite{PilaDer} asserts that if $g_1, \ldots, g_n$ are in distinct cosets of $\operatorname{GL}_2(\QQ)$, then
$j(g_1 t), \ldots, j(g_n t)$ are independent over $\CC$.  However, if $g_i$ and $g_j$ are in the same coset
of $\operatorname{GL}_2(\QQ)$, then $j(g_i t)$ and $j(g_j t)$ are interalgebraic over $\QQ$ as witnessed
by an appropriate modular polynomial $F_N (x,y)$ \cite[see pages 183-186]{MilneEC}; we will refer to this relation (between solutions of the differential equation $E$) as a Hecke correspondence; Pila \cite{PilaDer, PilaAO} calls these modular relations. The only way that the elements of a Morley sequence may be interalgebraic is if the type itself is
algebraic.  Hence, from the dependence of the Morley sequence we deduce that $\tp(j/B)$ is algebraic, as required.
\end{proof}

Using Nishioka's theorem, we strengthen Theorem~\ref{Urank1} to the conclusion that $E$ defines a \emph{strongly} minimal set.

\begin{thm}\label{stronglyminimal} The set $X$ defined by the differential equation $E$ is strongly minimal.
\end{thm}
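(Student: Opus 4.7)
The plan is to combine Theorem~\ref{Urank1} (giving that $\tp(j/\CC)$ is minimal of $U$-rank one) with Nishioka's Theorem~\ref{NishiokaSpecial} to show that every element of $X$ not lying in $\CC$ realizes precisely the type $\tp(j/\CC)$. Since there will then be only one non-algebraic type realized in $X$, and it is minimal, $X$ has Morley rank and degree one, i.e., is strongly minimal.

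Fix $a \in X$ with $a \notin \CC$ in some differentially closed extension. First I apply Seidenberg's theorem (Theorem~\ref{Seidenberg}) to embed $\CC\langle a\rangle$ into the meromorphic functions on a disk $U \subseteq \CC$, sending $a$ to a non-constant meromorphic $\tilde a$ with $\chi(\tilde a)=0$. As in the proof of Lemma~\ref{RLEM}, at any regular non-critical point of $\tilde a$ I can locally invert $j$ to write $\tilde a = j \circ g$ for some holomorphic $g$; the Schwarzian chain rule together with $\chi(j) = 0$ gives
\[
0 \;=\; \chi(j \circ g) \;=\; (\chi(j) \circ g)(g')^2 + S(g) \;=\; S(g),
\]
so $g$ is a Möbius transformation and extends to some $g \in \GL_2(\CC)$. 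Now $\tilde a$ is automorphic for the conjugate $g^{-1}\SL_2(\ZZ)g$, which is Zariski dense in $\SL_2(\CC)$ since $\SL_2(\ZZ)$ is. Nishioka's theorem then forbids any nontrivial algebraic differential equation of order at most two satisfied by $\tilde a$ over $\CC$, so $a, a', a''$ are algebraically independent over $\CC$. Combined with $\chi(a)=0$, which expresses $a'''$ rationally as $\tfrac{3}{2}(a'')^2/a' - R(a)(a')^3$, this gives $\operatorname{tr.deg}_{\CC}\CC\langle a\rangle = 3$.

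To finish I need $\tp(a/\CC) = \tp(j/\CC)$. The recursion $y''' = \tfrac{3}{2}(y'')^2/y' - R(y)(y')^3$ together with iterated differentiation expresses every $y^{(n)}$, $n \geq 3$, as a rational function $R_n(y, y', y'')$ depending only on $\chi$. So for any differential polynomial $P \in \CC\{y\}$, the value $P(a)$ is the evaluation at $(a, a', a'')$ of a rational function $\tilde P(y_0, y_1, y_2)$ obtained by substituting $y^{(n)} \mapsto R_n$; this $\tilde P$ is intrinsic to $\chi$ and independent of $a$. Since $(a, a', a'')$ and $(j, j', j'')$ are both algebraically independent triples over $\CC$, we have $P(a)=0$ iff $\tilde P \equiv 0$ iff $P(j)=0$, so the prime differential ideals over $\CC$ of $a$ and $j$ coincide. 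By quantifier elimination in $\operatorname{DCF}_0$, $\tp(a/\CC) = \tp(j/\CC)$. The main obstacle lies in carefully executing this reduction modulo $\chi$: one needs $a' \neq 0$ and $R(a)$ nonzero, which hold at the generic point of $X$, so that the rational recursion giving $\tilde P$ is well-defined and independent of the choice of realization.
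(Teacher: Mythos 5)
Your proposal is correct and follows essentially the same route as the paper: Seidenberg's theorem plus local inversion of $j$ and the Schwarzian chain rule (the content of the proof of Lemma~\ref{RLEM}) to write an arbitrary solution as $j(g\cdot t)$, Nishioka's theorem applied to the conjugate group $g^{-1}\SL_2(\ZZ)g$ to rule out order $\leq 2$ relations over $\CC$, and then minimality of $\tp(j/\CC)$ (Theorem~\ref{Urank1}) to conclude strong minimality. The only difference is that you spell out explicitly what the paper compresses into ``the equation $E$ has degree one in order three,'' namely that all transcendence-degree-three solutions realize the same type via the rational recursion for $y'''$.
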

\begin{proof} As the equation $E$ has degree one in order three, it
suffices to show that any differential specialization of $j$ over $ \m C$ satisfies no lower order differential equation.
By the proof of Lemma~\ref{RLEM} and Theorem~\ref{Seidenberg}, and the fact that any differential specialization $f$ satisfies the equation
$$S(f) + R(f) ( f')^2 =0$$
one can assume that $f=j(gt)$ for some $g \in \operatorname{GL}_2 ( \m C)$. Now $f$ satisfies the hypotheses of
Theorem~\ref{NishiokaSpecial} (with $G=\SL_2(\m Z)^g = g^{-1} \SL_2(\m Z) g$)
 and so it satisfies no nontrivial order two or less equation over $\m C$.
\end{proof}

\begin{rem} Recall that we remarked in Remark~\ref{Nishiokagen} that Nishioka proved a slightly more general statement than
that an automorphic form $f(t)$ satisfies no order two differential equation over $ \m C$.
In fact, he proves that the conclusion holds over $\m C (t, e^t)$. For the $j$-function, this conclusion follows from minimality of the type
$\tp(j/\CC)$. Indeed, this depends very little on the functions $t$ and $e^t$. The same conclusion holds for any function (or collection of functions) $f(t)$ so that $f(t)$ satisfies an order two (or lower) differential equation over $\m C$.  To see this, we show
by induction on $n$ that if $f_1, \ldots, f_n$ is a finite sequence of functions all of which satisfy differential
equations of order at most two, then
$j$ is independent from $f_1, \ldots, f_n$ over $\CC$.   The case of $n = 0$ is trivial.  For the inductive
case of $n+1$, let $K := \CC \langle f_1, \ldots, f_n \rangle$ be the differential
field generated by $f_1, \ldots, f_n$ over $\CC$.  By induction, $j$ is free from $K$ over $\CC$ so that
$\tp(j/K)$ is minimal as well implying that if $j$ depends on $f_1, \ldots, f_{n+1}$ over $\CC$, then $j \in K \langle f_{n+1} \rangle^\text{alg}$,
but $\operatorname{tr.deg}_K(K \langle j \rangle) = 3 > 2 \geq \operatorname{tr.deg}_K (K \langle f_{n+1} \rangle^{\text{alg}})$.

Similar remarks apply to Pila's theorems~\cite{PilaDer}. Algebraic equations potentially satisfied by
$\{ j(g_i t), j'(g_it) , j'' (g_i t) \}_{i=1}^n$ where $g_i \in \operatorname{GL}_2 (\m C)$ are considered
over function fields which include exponential and Weierstrass $\wp$-functions. These satisfy first order and second order
 differential equations, so the remarks from the previous paragraph apply.
\end{rem}

The main theorem of Hrushovski's manuscript~\cite{HrushovskiJou} is that if the definable
set $X$ is defined by an order one differential equation over the constants and
is orthogonal to the constants, then the induced structure on $X$ over any finite
set of parameters over which it is defined is $\aleph_0$-categorical.  Under additional technical assumptions,
Rosen~\cite{RosenJou} proved the theorem without the hypothesis that $X$ is defined over
the constants (however, the technical assumptions are of a nature such that it is not obvious if they ever hold).   It has been known since the identification of Manin kernels that
not every strongly minimal which is orthogonal to the constants must have
$\aleph_0$-categorical induced structure, but the question of whether a strongly minimal
set with trivial forking geometry must have $\aleph_0$-categorical induced structure has remained open until now.

\begin{thm}\label{strongmin}
The set $X$ defined by the differential equation $E$ is
strongly minimal with trivial forking geometry but does not have
$\aleph_0$-categorical induced structure over any base set.
\end{thm}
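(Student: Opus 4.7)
Strong minimality is given by Theorem~\ref{stronglyminimal}; what remains is to establish triviality of the forking geometry on $X$ and failure of $\aleph_0$-categoricity of the induced structure.

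For triviality, I plan to show that for any tuple $(a_1, \ldots, a_n)$ from $X$, pairwise independence over $\CC$ already forces full algebraic independence over $\CC$; the version over an arbitrary base $B$ will follow by absorbing parameters from $B$, once one knows that the only algebraic relations among realizations of the generic type of $X$ are pairwise ones. The key observation is that the proof of Lemma~\ref{RLEM} never actually uses indiscernibility of the sequence $\{d_i\}$: it uses only that each $d_i$ satisfies $\chi(y) = 0$ and that, via Seidenberg's Theorem~\ref{Seidenberg}, the differential field $\CC\langle d_1, \ldots, d_n \rangle$ embeds into the meromorphic functions on an open ball $V \subseteq \fh$. Applied to an arbitrary $n$-tuple $(a_1, \ldots, a_n)$ from $X$, this argument realizes each $a_i$ as $j(g_i \cdot t)$ for some $g_i \in \operatorname{GL}_2(\CC)$. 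Pairwise independence over $\CC$ then forces the cosets $g_i \operatorname{GL}_2(\QQ)$ to be pairwise distinct, because any coincidence $g_j \in g_i \operatorname{GL}_2(\QQ)$ would make $a_i$ and $a_j$ interalgebraic via an appropriate modular polynomial $F_N$. Pila's modular Ax-Lindemann-Weierstrass with derivatives theorem then yields algebraic independence of $j(g_1 t), \ldots, j(g_n t)$, and hence of $a_1, \ldots, a_n$, over $\CC$.

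For the failure of $\aleph_0$-categoricity over any base $B$, I plan to exhibit a point $a \in X$ with infinite algebraic closure inside $X$; since a strongly minimal structure is $\aleph_0$-categorical exactly when algebraic closures of finite sets are finite, this suffices. Choose $a \in X$ generic over $B$. For each $N \geq 2$, the classical modular polynomial $F_N(x,y) \in \ZZ[x,y]$ is nonconstant in $y$ and satisfies $F_N(j(t), j(Nt)) = 0$; whenever $F_N(a, b) = 0$ in the ambient differentially closed field, the element $b$ is algebraic over $a$ and lies in $X$. The latter is again seen by Seidenberg: after realizing $a$ as $j(g \cdot t)$, the roots of $F_N(j(g t), y) = 0$ are precisely values of $j$ at the other Hecke transforms of $g \cdot t$, all of which solve $\chi(y) = 0$. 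For generic $a$ these Hecke roots are genuinely new — for instance distinct primes $p$ yield pairwise disjoint nonempty finite sets of new roots — so the full Hecke orbit of $a$ is infinite and contained in $\operatorname{acl}_X(B \cup \{a\})$.

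The main obstacle is the mild strengthening of Lemma~\ref{RLEM} just described — namely, that its proof applies verbatim to arbitrary tuples from $X$ with no indiscernibility hypothesis — after which both halves of the theorem follow readily from Pila's theorem and the basic theory of modular polynomials recalled in Section~\ref{basicthy}.
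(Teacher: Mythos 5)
Your proposal is correct and follows essentially the same route as the paper: strong minimality is quoted from Theorem~\ref{stronglyminimal}, triviality comes from realizing tuples in $X$ as $j(g_i\cdot t)$ (via Seidenberg and the argument of Lemma~\ref{RLEM}) and applying Pila's modular Ax--Lindemann--Weierstrass theorem with derivatives, and non-$\aleph_0$-categoricity comes from the infinitude of the Hecke orbit of a generic point inside $X$. Your explicit observation that the proof of Lemma~\ref{RLEM} uses nothing about indiscernibility and so applies to arbitrary tuples of solutions of $\chi(y)=0$ is exactly the (implicit) strengthening the paper also relies on when it invokes that lemma for pairwise independent realizations.
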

\begin{proof}
Our main theorem, Theorem~\ref{stronglyminimal}, asserts that $X$ is strongly minimal.
Triviality of the forking geometry of the generic type of $X$ (and, hence, of
$X$ itself) is an immediate consequence of Pila's modular Ax-Lindemann-Weierstrass
theorem with derivatives.  Indeed, suppose that $a_1, \ldots, a_n \in X$ are $n$ pairwise independent
realizations of the generic type of $X$.  We shall check that they are independent
as a set.  By Theorem~\ref{RLEM} we may realize these points as
meromorphic functions of the form $j(g_i \cdot t)$ where $g_i \in \operatorname{GL}_2({\mathbb C})$.
By Pila's theorem, provided that $g_1, \ldots, g_n$ lie in distinct cosets of $\operatorname{GL}_2({\mathbb Q})$,
the differential field ${\mathbb C} \langle a_1, \ldots, a_n \rangle$ has transcendence degree
$3n$ over ${\mathbb C}$, which is the dimension of the field generated by a Morley sequence of
length $n$.  On the other hand, if $g_i = \delta g_j$ for some
$\delta \in \operatorname{GL}_2(\QQ)$ and $i \neq j$, then $a_i$ and $a_j$ are interalgebraic.  Indeed,
the image of the complex analytic variety $\{ (x,y) \in \fh^2 ~:~ y = \delta \cdot x \}$
under $(x,y) \mapsto (j(x),j(y))$ is a modular curve.
As we have assumed the $a_i$'s to be pairwise independent, we cannot have any geodesic relations
amongst the $g_i$'s.

On the other hand, the Hecke correspondences show that $X$ is not $\aleph_0$-categorical.
All of the functions $j(\gamma \cdot t)$ for $\gamma \in \operatorname{GL}_2({\mathbb Q})$ lie in $X$
and each is algebraic over $j$ as witnessed by the Hecke correspondence coming from $\gamma$.   As
$\operatorname{GL}_2({\mathbb Q})/\operatorname{GL}_2({\mathbb Z})$ is infinite, we
see that there are infinitely many elements of $X$ algebraic over the single element $j$.
Hence, $X$ does not have $\aleph_0$-categorical induced structure.
\end{proof}

\begin{rem} Suppose that $\Gamma \leq \SL_2( \m Z)$ is an arithmetic subgroup. One might inquire about the differential algebraic properties of $j_{\Gamma}$, where $j_\Gamma$ is the analytic function expressing $\Gamma \backslash \fh$ as an algebraic curve.  First, since $j_{\Gamma}$ is not algebraic and is interalgebraic with $j$ over $\m Q$, we can see that the type is strongly minimal. In differential algebra, generally, this would not be enough to conclude that the locus of the type is strongly minimal. However, Nishioka's theorem applies to automorphic functions in this setting, and so one sees that there are no order two or lower solutions to the differential equation satisfied by $j_{\Gamma}$. Further, for $g \in SL_2 ( \m C)$, we have the following diagram:

$$\xymatrix{  j_{\Gamma} (t) \ar@{-}[d]  \ar@{~}[r] & j_{\Gamma}(gt) \ar@{-}[d] \\
j(t) \ar@{.}[r] & j(gt)}$$
The solid vertical lines indicate interalgebraicity. The relationship of $j(t)$ and $j(gt)$ is completely controlled by the Pila's modular Ax-Lindemann-Weiestrass theorem \cite{PilaAO} (and by the stronger theorem, to the derivatives as well \cite{PilaDer}). It follows
from the interalgebraicity of $j(gt)$ and $j_\Gamma(gt)$, that if $g_1, \ldots, g_n$ lie in distinct cosets of $\SL_2(\QQ)$, then
the functions $j_{\Gamma} (g_1 t), \ldots, j_{\Gamma}(g_n t)$ are differentially algebraically independent.  That is,
the relationship (in terms of algebraic closure in the sense of differential fields) indicated by the top (curly) line is completely
controlled by the modular Ax-Lindemann-Weierstrass theorem and the results of this paper; naturally, one obtains as a by-product the Ax-Lindemann-Weierstrass theorem with derivatives for $j_{\Gamma} (t)$.

\end{rem}

\section{Other fibers}
\label{otherfibers}
In the previous sections, we investigated the properties of the the differential algebraic equation satisfied 
by the $j$-function, or in the language of~\cite{buiumIII}, we investigated the fiber of $\chi$ above $0$. 
In this section, we will investigate the other fibers as well as the possible algebraic relations across fibers 
in order to prove finiteness results.   The general problem will be reduced to an analytic one via Seidenberg's 
theorem combined with the special nature of the differential equations in question. 

\subsection{Minimality, strong minimality and other trivialities}
Fix $a_s$, an element in some differential field extension of $\QQ$.  
(Here the subscript ``$s$'' is meant to suggest the Schwarzian.  The reason
for this choice of notation should become clear shortly.)    By Seidenberg's Theorem~\ref{Seidenberg}
 we may realize the abstract differential field $\m Q \langle a_s \rangle $ as a differential 
 subfield of $\mc M (U)$ the field of meromorphic functions on some connected open subset $U$ of $\fh$. 
 We shall write the variable ranging over $U$ as $t$ and will write such expressions as $a_s (t)$ when
 we we wish to regard $a_s$ as a meromorphic function. Perhaps at the cost of shrinking the open domain $U$, 
 we may find some $\tilde a (t)$, an analytic function on some $U$ such that $\chi (j (\tilde a (t))) =  a_s (t)$, 
 as functions of $t$.    Alternatively, from the analytic
 description of $\chi$ we have that $a_s$ is the Schwarzian of $\tilde a$.  Define $a:= j ( \tilde a ) \in {\mathcal M}(U)$.

For a given derivation $\delta$, we remind the reader of our notation from the introduction:
$$\chi _\delta (x) := S_\delta (x) + \frac{x^2 - 1968 x +2\, 654\, 208}{2x^2 (x-1728)^2} (\delta x)^2 \text{ .}$$

The following obvious observation (which is an immediate consequence of the chain rule) 
will be used throughout the remainder of the section.

\begin{lem} \label{changeofvariables} If $V \subseteq U$ is a small enough connected open domain on which $\tilde a$ is 
 univalent and $K$ is a $\frac{d}{dt}$-differential subfield
of ${\mathcal M}(V)$ containing $a$ and $\tilde a$,  then $K$ is also a $\frac{d}{du}$-differential field where $u = \tilde a (t)$.
 Furthermore, $\chi_{\frac{d}{du}}(a)=0$.
\end{lem}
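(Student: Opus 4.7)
The plan is to observe that the substitution $u = \tilde a(t)$ simply rescales the derivation on $\mathcal M(V)$ by the invertible factor $1/\tilde a'$, and then to notice that, viewed as a function of $u$, the element $a$ is literally $j(u)$.

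First, since $\tilde a$ is univalent on $V$, its derivative $\tilde a' = \frac{d\tilde a}{dt}$ has no zero on $V$, so $1/\tilde a' \in \mathcal M(V)$. By the chain rule the two derivations are related by
$$\frac{d}{du} = \frac{1}{\tilde a'}\cdot\frac{d}{dt}$$
when applied to any meromorphic function of $t$ that may also be regarded as a function of $u$. Because $K$ contains $\tilde a$ and is closed under $d/dt$, it contains $\tilde a'$, hence $1/\tilde a'$, and is therefore closed under $d/du$ as well. This gives the first assertion.

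For the second assertion, the essential point is that $a = j\circ\tilde a$ as a function of $t$, so after substituting $u = \tilde a(t)$ the function $a$ is nothing but $u\mapsto j(u)$. Since $\chi(y) = S(y) + R(y)(y')^2$ depends only on the ratio of successive derivatives and on $y$ itself, computing $\chi_{d/du}(a)$ is the same as computing $\chi_{d/du}(j)$ evaluated at the point $u$. But we already know $\chi(j) = 0$ with respect to the standard derivative of the upper half plane, and by construction $u$ ranges over an open subset of $\fh$, so $\chi_{d/du}(a) = 0$. Alternatively, one can derive the same conclusion purely algebraically from the Schwarzian chain rule $S(f\circ g) = (g')^2 (S(f)\circ g) + S(g)$ together with $(a')^2 = (j'(\tilde a))^2(\tilde a')^2$, which displays $\chi_{d/dt}(a)$ as $(\tilde a')^2\,\chi_{d/du}(j)(\tilde a) + S_{d/dt}(\tilde a)$; in our setup this equals $a_s = S_{d/dt}(\tilde a)$, forcing the first summand, and hence $\chi_{d/du}(a)$, to vanish.

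There is essentially no obstacle here; the lemma is really a bookkeeping statement about how the Schwarzian and the operator $\chi$ transform under a holomorphic change of variable, and the only mild subtlety is restricting to a subdomain on which $\tilde a$ is univalent so that $1/\tilde a'$ is available inside $K$.
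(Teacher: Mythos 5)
Your proof is correct and is exactly the chain-rule argument the paper has in mind (the paper states this lemma without proof, calling it ``an immediate consequence of the chain rule''): univalence gives $\tilde a'\neq 0$ on $V$, so $\frac{d}{du}=\frac{1}{\tilde a'}\frac{d}{dt}$ preserves $K$, and since $a$ viewed as a function of $u$ is just $j$, the identity $\chi_{\frac{d}{du}}(a)=0$ follows; your alternative derivation via $\chi_{\frac{d}{dt}}(j\circ\tilde a)=(\tilde a')^2\,(\chi(j)\circ\tilde a)+S(\tilde a)=a_s$ matches the computation in Lemma~\ref{RLEM}. (The aside that $\chi$ ``depends only on the ratio of successive derivatives'' is inaccurate for the $(y')^2$ term, but it plays no role in the argument.)
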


\begin{prop}
\label{othersm}
The set defined by the formula $\chi(x) = a_s$ is strongly minimal.  Moreover, if 
$a_1, a_2, \ldots a_n$ satisfy $\chi _\delta (a_i)  = a_s$ and $B$ is any algebraically closed differential field
containing $a_s$,  then $a_1 , a_2 , \ldots a_n$ is independent over $B$, unless there is some 
$k$ with $a_k \in B$ or there is a pair $i < j$  for which $F_N (a_1, a_2) = 0$ for some modular polynomial $F_N$.
\end{prop}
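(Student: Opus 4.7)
The plan is to reduce Proposition~\ref{othersm} to the fiber $\chi(x) = 0$ case already handled in Theorems~\ref{stronglyminimal} and~\ref{strongmin}, via the change-of-derivation trick of Lemma~\ref{changeofvariables}. First I would apply Seidenberg's theorem to realize the differential field $\QQ\langle a_s, a_1, \ldots, a_n\rangle$ inside $\mc M(V)$ for some open $V \subseteq \fh$, and pick an analytic $\tilde a \colon V \to \fh$ with $S_\delta(\tilde a) = a_s$. Arguing exactly as in the proof of Lemma~\ref{RLEM}, surjectivity of $j$ lets us write any non-algebraic solution $b$ of $\chi_\delta(x) = a_s$ as $b = j(\psi)$, whereupon the Schwarzian chain rule forces $S(\psi) = a_s = S(\tilde a)$ and hence $\psi = g\tilde a$ for some $g \in \GL_2(\CC)$. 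Thus every solution is either algebraic over $B$ or of the form $j(g\tilde a)$.

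Next I would introduce the derivation $\delta' := (1/\tilde a')\,\delta$ from Lemma~\ref{changeofvariables}. A direct Schwarzian calculation yields the identity
$$\chi_{\delta'}(y) \;=\; \frac{\chi_\delta(y) - a_s}{(\tilde a')^2},$$
so the solution set of $\chi_\delta(x) = a_s$ coincides with that of $\chi_{\delta'}(x) = 0$ as subsets of $\mc M(V)$, and each $b = j(g\tilde a)$ becomes the automorphic function $j(g \cdot u)$ when viewed in the time variable $u = \tilde a(t)$ (so $\delta'(u) = 1$), automorphic for the Zariski-dense conjugate $g^{-1}\SL_2(\ZZ) g \subseteq \SL_2(\CC)$. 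Strong minimality of $\chi_\delta(x) = a_s$ then follows in the same fashion as Theorem~\ref{stronglyminimal}: the equation has degree one in the highest derivative, so it suffices to rule out lower-order equations, and via the change-of-derivation formula together with the base-tracking argument of Remark~\ref{Nishiokagen} this reduces to showing that $j(g u)$ satisfies no order-$\le 2$ $\delta'$-equation over $\CC$, which is Nishioka's Theorem~\ref{NishiokaSpecial}.

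For the independence statement, I would imitate the Morley-sequence argument in the proof of Theorem~\ref{Urank1}. Assume $\{a_1,\ldots,a_n\}$ is dependent over $B$ and no $a_k \in B$; by triviality of the forking geometry of $\chi_{\delta'}(x) = 0$ (Theorem~\ref{strongmin}), which transfers to our fiber through the same change of derivation, some pair $a_i, a_j$ must be interalgebraic over $B$. Applying Shelah reflection to this dependence produces a finite Morley sequence whose entries are realized analytically as $j(h_r \tilde a)$ via Step~1; then Pila's modular Ax-Lindemann-Weierstrass with derivatives, applied in the $\delta'$-picture to the $j(h_r u)$, forces two $h_r$ to lie in a common coset of $\GL_2(\QQ)$, producing the required modular relation $F_N(a_i, a_j) = 0$. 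The main obstacle I anticipate is the careful passage between the $\delta$- and $\delta'$-differential algebraic closures: strong minimality and forking are properties of specific differential field structures, so verifying that the solution-set identification genuinely transfers these properties — and that dependence over $B \supseteq \CC\langle a_s\rangle$ cannot produce relations beyond the two excluded possibilities — will require tracking constants and base fields throughout the change of derivation.
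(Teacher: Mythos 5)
Your proposal is correct and takes essentially the same route as the paper: Seidenberg's theorem plus the change of derivation $u = \tilde a(t)$ to identify the fiber $\chi_\delta(x) = a_s$ with $\chi_{\frac{d}{du}}(x) = 0$, then quoting strong minimality, triviality, and the Hecke/Pila description of dependence for the $j$-equation itself. The one obstacle you flag at the end --- that $B$ need not contain $\tilde a$ and that dependence must be tracked across the change of base --- is resolved in the paper by a one-line forking-calculus move: replace $a_1, \ldots, a_n$ by realizations of the nonforking extension of $\tp(a_1,\ldots,a_n/B)$ to $B\langle \tilde a \rangle$, so that the tuple is independent from $\tilde a$ over $B$ and any dependence over $B\langle \tilde a \rangle$ reduces to dependence over the original $B$ by transitivity.
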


\begin{proof}
Let us first address strong minimality. 
It suffices to show that in some differentially closed field 
$\UU$ extending $\QQ \langle a_s \rangle$ the set 
$F_{a_s} := \{ x \in \UU ~:~ \chi_{\frac{d}{dt}}(x) = a_s \}$ is
 infinite but every every differentially constructible subset is finite or cofinite. 
Taking $\UU$ to be a differential closure of $\QQ \langle a_s, \tilde a \rangle$ and 
using Seidenberg's Theorem~\ref{Seidenberg} repeatedly, 
we may realize $\UU$ as a differential field of germs of meromorphic functions.
By the observation of Lemma~\ref{changeofvariables}, the differential field $\UU$ is 
also a differential field with respect to 
$\frac{d}{du}$ and the set $F_{a_s}$ is equal to $\{ x \in \UU ~:~ \chi_{\frac{d}{du}} (x) = 0 \}$.  
By the strong minimality of the equation for 
the $j$-function, this latter set is infinite and every $\frac{d}{du}$-differentially 
constructible subset is finite or cofinite.  In particular,
since every $\frac{d}{dt}$-differentially constructible set 
is a $\frac{d}{du}$-differentially constructible set, $F_{a_s}$ is 
strongly minimal.

For the ``moreover'' clause describing dependence amongst the solutions to $\chi(x) = a_s$, replacing 
$a_1, \ldots, a_n$ with realizations of the nonforking extension of 
$\tp(a_1,\ldots,a_n/B)$ to $B \langle \tilde a \rangle$, we may assume that $a_1, \ldots, a_n$
is independent from $\tilde{a}$ over $B$. 
Then as in the proof of strong minimality, we see that $a_1, \ldots, a_n$
satisfy $\chi_{\frac{d}{du}}(x) = 0$ and that any dependence must therefore come from 
algebraicity over $B \langle \tilde a \rangle$, which would reduce to algebraicity over the original $B$ by 
forking transitivity, or a Hecke relation.  
\end{proof}
 
\begin{rem} Proposition~\ref{othersm} characterizes algebraic closure in $\chi _\delta ^{-1} (a_s)$ and shows 
that the sets have trivial forking geometry which is not $\aleph_0$-categorical. 
\end{rem}

\subsection{Orthogonality}
\label{orthosect}
We begin this section with some standard notation from differential algebraic geometry, which we will require in the proof of our result.  The constructions of prolongation spaces and of the corresponding
differential sections are valid in a much more general context than what we present here where we specialize to
embedded affine varieties and work with coordinates.  Further details may be found in~\cite{MSJETS}.  Let $(K,\partial)$ be a differential field of characteristic zero and $n$ and $\ell$ a pair of natural
numbers.   We define the $\ell^\text{th}$ prolongation space of
affine $n$ space, $\tau_\ell {\mathbb A}^n$, to be the ${\mathbb A}^{n(\ell + 1)}$
where if the coordinates on ${\mathbb A}^n$ are $x_1, \ldots, x_n$, then the coordinates
on $\tau_\ell {\mathbb A}^n$ are $x_{i,j}$ for $1 \leq i \leq n$ and $0 \leq j \leq \ell$.
We define $\nabla_\ell:{\mathbb A}^n(K) \to \tau_\ell {\mathbb A}^n (K)$
by the rule
$$(a_1,\ldots,a_n) \mapsto (a_1,\ldots,a_n;a_1',\ldots,a_n'; \ldots; a_1^{(\ell)}, \ldots, a_n^{(\ell)})$$
where as above we write $x'$ for $\partial(x)$ and $x^{(\ell)}$ for $\partial^\ell (x)$.   If
$(L,\partial)$ is a differential field extension of $(K,\partial)$ we continue to write $\nabla_\ell$
for the corresponding map on ${\mathbb A}^n(L)$.

If $X \subseteq {\mathbb A}^n$ is an embedded affine variety and $S, T \subseteq \tau_\ell {\mathbb A}^{n}$
are two subvarieties of the prolongation space, then we define the differential constructible
set $(X,S \smallsetminus T)^\sharp$ by the rule
$$(X,S \smallsetminus T)^\sharp(K) := \{ a \in X(K) ~:~ \nabla(a) \in (S \smallsetminus T)(K) \} \text{ .}$$
 
The particular differential algebraic varieties which interest us are given by the fibers of $\chi$: 

$$\chi _\delta (x) := S_\delta (x) + \frac{x^2 - 1968 x +2\, 654\, 208}{2x^2 (x-1728)^2} (\delta x)^2 \text{ .}$$
Thus, $\chi^{-1} _\delta  ( a_s)$ is given by the set of $x$ such that 

$$(x''' x' - \frac{3}{2} (x'')^2)(2x^2 (x-1728)^2) + (x^2 - 1968 x +2\, 654\, 208) ( x')^4 = a_s (x')^2 (2x^2 (x-1728)^2)$$ 
and $x' \neq 0$ (note that this implies that $2x^2 (x-1728)^2 \neq 0$). In this case, $S$ is given 
by the above algebraic equation on $\tau _3 (\m A^1) = \m A^4$ and $T$ is given by t
he equation $x'=0$ in the same space.  We note that $S$ is an irreducible hypersurface of degree $6$.

When analyzing possible algebraic relations between collections of solutions 
(and their derivatives) to various fibers of $\chi$, the previous section gives 
a complete account of the algebraic relations within a given fiber. In this section, 
we prove that there are no algebraic relations across fibers:

\begin{thm}
\label{orthothm}
For $ b_s \neq c_s$, $\chi^{-1} ( b_s) \perp \chi^{-1} ( c_s)$.
\end{thm}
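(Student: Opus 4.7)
The plan is to reproduce in the two-fiber setting the strategy of Theorems~\ref{Urank1} and \ref{stronglyminimal}: realize a hypothetical algebraic dependence analytically via Seidenberg, and derive a contradiction from Pila's modular Ax-Lindemann-Weierstrass theorem with derivatives. Suppose for contradiction that $\chi^{-1}(b_s) \not\perp \chi^{-1}(c_s)$. By Proposition~\ref{othersm} both fibers are strongly minimal with trivial forking geometry, so non-orthogonality over the stationary base $B_0 := \QQ\langle b_s, c_s\rangle^{\text{alg}}$ yields generic realizations $b \in \chi^{-1}(b_s)$ and $c \in \chi^{-1}(c_s)$ with $b \in \operatorname{acl}(B_0, c) \setminus \operatorname{acl}(B_0)$; any additional parameters needed to witness the interalgebraicity can be absorbed into a finitely generated differential extension of $B_0$ by a Shelah-reflection / canonical-base argument of the type used in the proof of Theorem~\ref{Urank1}.

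Next, by repeated application of Seidenberg's Theorem~\ref{Seidenberg}, I would embed the resulting finitely generated differential field containing $b_s, c_s, b, c$ into the field $\mc M(U)$ of meromorphic functions on some open $U \subseteq \CC$. Using the surjectivity of $j\colon \fh \to \CC$ together with the change-of-variables observation from Lemma~\ref{RLEM} and Proposition~\ref{othersm}, I would produce (on some smaller $V \subseteq U$) holomorphic maps $\tilde b, \tilde c\colon V \to \fh$ such that $b = j\circ\tilde b$, $c = j\circ\tilde c$, and with Schwarzians $S(\tilde b) = b_s$, $S(\tilde c) = c_s$.

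The central observation is that the Schwarzian is invariant under post-composition with any Möbius transformation: $S(\gamma \circ \tilde c) = S(\tilde c)$ for every $\gamma \in \GL_2(\CC)$. Combined with $b_s \neq c_s$, this rules out any $\gamma \in \GL_2(\CC)$—and in particular any $\gamma \in \GL_2(\QQ)$—with $\tilde b = \gamma\circ\tilde c$, so $\tilde b$ and $\tilde c$ represent distinct $\GL_2(\QQ)$-cosets. Pila's modular Ax-Lindemann-Weierstrass theorem with derivatives \cite{PilaDer} then asserts that $j(\tilde b), j'(\tilde b), j''(\tilde b), j(\tilde c), j'(\tilde c), j''(\tilde c)$ are algebraically independent over $\CC\langle\tilde b,\tilde c\rangle$. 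Since $b_s = S(\tilde b)$ and $c_s = S(\tilde c)$ both lie in $\CC\langle\tilde b,\tilde c\rangle$, we have $B_0 \subseteq \CC\langle\tilde b,\tilde c\rangle^{\text{alg}}$ and $B_0\langle c\rangle \subseteq \CC\langle\tilde b,\tilde c\rangle\bigl(j(\tilde c), j'(\tilde c), j''(\tilde c)\bigr)^{\text{alg}}$. The independence statement therefore forces $b = j(\tilde b)$ to be transcendental over $B_0\langle c\rangle$, contradicting $b \in \operatorname{acl}(B_0, c)$.

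The principal obstacle I anticipate is the initial reduction—guaranteeing that the non-orthogonality witness lives over a sufficiently small finitely generated differential extension of $\QQ\langle b_s, c_s\rangle$ that embeds cleanly into $\CC\langle\tilde b,\tilde c\rangle^{\text{alg}}$ after Seidenberg—so that Pila's independence statement genuinely suffices to refute the presumed interalgebraicity. Once this is handled, the Schwarzian-invariance observation and the direct application of Pila's theorem run entirely in parallel to the proof of Theorem~\ref{stronglyminimal}, and the contradiction is immediate.
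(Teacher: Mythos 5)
Your final step---deriving $b_s = c_s$ from a M\"obius relation $\tilde b = \gamma \circ \tilde c$ via invariance of the Schwarzian under postcomposition---is exactly the mechanism the paper uses to close its own proof, and the reduction of non-orthogonality to an interalgebraicity over a finitely generated base is also fine. The gap is in the middle: Pila's modular Ax--Lindemann--Weierstrass theorem with derivatives does not say what you need it to say. That theorem concerns $j, j', j''$ evaluated at the coordinates of an \emph{algebraic} subvariety of $\fh^n$; in the form used throughout this paper it asserts independence of $j(g_1 t), \ldots, j(g_n t)$ and their derivatives when the $g_i$ are \emph{M\"obius} functions of a common variable lying in distinct $\GL_2(\QQ)$-cosets. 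Your uniformizers satisfy $S(\tilde b) = b_s$ and $S(\tilde c) = c_s$ with $b_s, c_s$ arbitrary (at least one nonzero), so they are not M\"obius functions of $t$, and the analytic curve $t \mapsto (\tilde b(t), \tilde c(t))$ in $\fh^2$ is in general not algebraic. Observing that $\tilde b$ and $\tilde c$ "represent distinct $\GL_2(\QQ)$-cosets" does not place you within the hypotheses of ALW; what you are actually invoking is an Ax--Schanuel-type statement for $j$ along arbitrary analytic germs, which is strictly stronger than what \cite{PilaDer} provides and is not used in this paper. Note also that the change-of-variables trick that rescues the single-fiber case (Lemma~\ref{changeofvariables}) cannot be applied to both fibers simultaneously when $b_s \neq c_s$: after setting $u = \tilde b(t)$ the second uniformizer becomes $\tilde c \circ \tilde b^{-1}$, still an arbitrary non-M\"obius map.

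This is precisely why the paper's proof of Theorem~\ref{orthothm} is so much longer than the proofs of Theorems~\ref{Urank1} and~\ref{stronglyminimal}. It reduces non-orthogonality to a finite-to-finite algebraic correspondence $\Gamma$ on $\tau_2(\AA^1) \times \tau_2(\AA^1)$, fixes a point $t_0$, shows (Claim~\ref{densefiber}) that the $\GL_2^+(\RR)$-orbit of the jet of a solution is Zariski dense in the fiber $\Gamma_{t_0}$, and then runs a group-theoretic argument (Claim~\ref{rationalstab} together with the fact that every automorphism of $\PSL_2(\RR)$ is inner) to show that the stabilizer of a component of $(J_2(j)\times J_2(j))^{-1}\Gamma_{t_0}$ is the graph of conjugation by some $g$, forcing $\Gamma_{t_0}$ to be a Hecke correspondence; only at that point does Schwarzian invariance deliver the contradiction with $b_s \neq c_s$. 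To repair your argument you would need either to reproduce an orbit/stabilizer analysis of this kind or to appeal to the Ax--Schanuel theorem for $j$, which is a different and stronger result than the one you cite.
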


\begin{proof}
By Proposition~\ref{othersm}, each of the definable sets $\chi^{-1}(b_s)$ and $\chi^{-1}(c_s)$ is strongly minimal.  
Hence, $\chi^{-1}(b_s) \not \perp \chi^{-1}(c_s)$ is equivalent to the existence of a finite-to-finite correspondence between these fibers, possibly defined over new parameters. 

Let us fix some finitely generated differential ring $R$ over which all of these data are defined, and assume also that $R$ additionally contains $b$ and $c$ where $b = j( \tilde b)$  and $\tilde b$ is such that $b_s$ is the Schwarzian of $\tilde b$ (and similarly for $c$); that is, $R$ contains a solution to each of the two fibers of $\chi$. By quantifier elimination, the description of algebraic closure in differential fields, and the fact that the third derivative of a solution to a fiber of $\chi$ is rational over the previous derivatives, we may assume that the finite to finite correspondence $\Gamma_0$ is given by $\nabla _2 ^{-1} \Gamma$ where $\Gamma \subseteq \tau_2(\AA^1 \times \AA^1) = \AA^3 \times \AA^3$. By induction (it is only necessary to get a contradiction for irreducible correspondences) and possibly further localizing $R$, we may assume that $\Gamma$ is an absolutely integral $R$-scheme and that $\Gamma$ gives a finite-to-finite correspondence on $\m A ^3 _R \times \m A^3 _R$. 

Let $K = \text{Frac} (R)$. Applying Seidenberg's embedding theorem, there is some connected open $U \subseteq \fh$
 such that $K$ embeds into $\cM(U)$. Further shrinking $U$ if necessary, we may assume that $R \subseteq \mc O (U) $ and  
 that $\tilde b'$ has no zeroes on $U$.
 For the remainder of the proof, we replace $R$ with $\mc O (U)$ so that
 we may identify $\operatorname{Spec}(R)$ with $U$.  Take $t_0 \in U$ and consider
 the fiber$\Gamma _{t_0}(\m C) \subseteq \tau _2 ( \m A^ 1 \times \m A^1 ) (\m C)$.

For a  review of the prolongation spaces and their relation to differential geometric jet spaces,
see sections 2.1 and 2.2 of~\cite{scanlon2014covering}. Taking differential geometric jets we obtain a map $J_2(j):J_2(\fh) \to \tau_2(\AA^1) (\m C)$  which 
fits into the following commutative diagram.

$$
\xymatrix{
\fh \times \fh  \ar[d]^{j \times j} &  & J_2(\fh \times \fh) \ar[d]^{J_2(j \times j)} \ar[ll]^{\pi} & J_2(\fh) \times J_2(\fh) \ar@{=}[l]  \ar[d]^{J_2(j) \times J_2(j)}\\
(\AA^1 \times \AA^1) (\m C) & &\tau_2(\AA^1 \times \AA^1) ( \m C )  \ar[ll]^{\pi} & (\AA^3 \times \AA^3) ( \m C) \ar@{=}[l]  
}
$$ 

\begin{cl}\label{densefiber} The set 
$$A := \{ (x ( t_0) , x ' (t_0 ), x ''(t_0), y ( t_0 ), y' (t_0), y''(t_0)) \in \Gamma (\m C)  
\, | \, \chi (x) = b_s , \, \chi (y) = c_s \}$$ is Zariski dense in $\Gamma_{t_0}$. 

\end{cl} 
\begin{pfcl} The projection of $F$ to $\m C^3$ contains the set

$$B :=\{ (j(g \cdot \tilde{b}(t_0)), \frac{d}{dt} (j(g \cdot \tilde{b}(t)) \vert_{t=t_0}, 
\frac{d^2}{dt^2} (j(g \cdot \tilde{b}(t)) \vert_{t = t_0}) ~\vert~ g \in \GL_2^+(\RR) \}$$
which is Zariski dense in $\m C^3$.  Indeed, because $\GL_2^+(\RR)$ acts transitively on 
$\fh$, we see that the projection of $B$ to the first coordinate is all of $\CC$.  The fibers in the 
tangent space over any such point are obtained by restricting $g$ to the stabilizer of some point in $\fh$.  
From the formula for the derivative, it is clear that the image of $B$ is dense in these fibers as well.  Likewise, 
the stabilizer of any point in $J_1(\fh)$ is one dimensional and again the formula for the second derivative 
shows that image of $A$ in the fibers of $J_2(\fh)$ over $J_1(\fh)$ is dense.  

Since 
$\Gamma_{t_0}$ is an irreducible finite-to-finite correspondence, the set $F$ 
is Zariski dense in $\Gamma_{t_0}$.  
\end{pfcl}

Note that the action of $\GL_2^+(\m R) \times \GL_2^+( \m R)$ on $\fh_ \times \fh$ extends 
canonically to an action on $J_2(\fh \times \fh)$ via the jets.   
Let $\widetilde{(\Gamma_{t_0})}$ be a component of $(J_2(j) \times J_2(j))^{-1} \Gamma_{t_0}$ and let 
$H_{t_0}$ be the setwise stabilizer of $\widetilde{ (\Gamma_{t_0})  }$ in $\GL_2^+( \m R) \times \GL_2^+( \m R)$.

\begin{cl}
\label{rationalstab}
For each $\gamma \in \GL_2^+(\QQ)$ there is some $\delta \in \GL_2^+(\QQ)$ with 
$(\gamma,\delta) \in H_{t_0}$  and likewise for each $\delta \in \GL_2^+(\QQ)$ there is 
some $\gamma \in \GL_2^+(\QQ)$ with $(\gamma,\delta) \in H_{t_0}$.
\end{cl}

\begin{pfcl}
Let $\gamma \in \GL_2^+(\QQ)$.  We know that the image under $j \times j$ of the graph of the action of 
$\gamma$ on $\fh$ is an 
algebraic correspondence on $\AA^1 \times \AA^1$ which restricts to a finite-to-finite correspondence from 
$\chi^{-1}(b_s)$ to itself.   The image of this correspondence under $\nabla_2^{-1} (\Gamma_{t_0})$ is thus 
a finite-to-finite correspondence from $\chi^{-1}(c_s)$ to itself.  By Proposition~\ref{othersm}, 
this new correspondence must be given by a finite union of Hecke relations which are themselves
images under $j \times j$ of graphs of the action of some $\delta_1, \ldots, \delta_n \in \GL_2^+(\QQ)$.

By Claim~\ref{densefiber}, there is a Zariski dense set of points $(x,y)$ in $\Gamma_{t_0}(\m C)$ so that 
for any $u$ with $(x,u)$ in the Hecke correspondence coming from $\gamma$, there is some 
$v$ with $(y,v)$ in the Hecke correspondence coming from from $\delta_i$ for some $i \leq n$ and
$(u,v) \in \Gamma_{t_0}(\m C)$.  As this is an algebraic condition, it holds everywhere on $\Gamma_{t_0}$.
Thus, for any $(x,y) \in \widetilde{(\Gamma_{t_0})}$, there is some $i \leq n$ and some $\epsilon \in \SL_2(\ZZ)$ 
such that $(\gamma \cdot x, \epsilon \delta_i \cdot y) \in \widetilde{(\Gamma_{t_0})}$.  For any given 
$\delta \in \GL_2^+(\QQ)$ the set $\widetilde{(\Gamma_{t_0})} \cap (\gamma^{-1},\delta^{-1}) \cdot 
\widetilde{(\Gamma_{t_0})}$ is a closed analytic subset of $\widetilde{(\Gamma_{t_0})}$.  As $\widetilde{(\Gamma_{t_0})}$
is irreducible and may expressed as the countable union of such intersections  we have 
$\widetilde{(\Gamma_{t_0})} = \widetilde{(\Gamma_{t_0})} \cap (\gamma^{-1},\delta^{-1}) \cdot \widetilde{(\Gamma_{t_0})}$ for 
some $\delta \in \GL_2^+(\QQ)$.  That is, 
$(\gamma,\delta) \in H_{t_0}$.  Arguing with the 
first and second coordinates reversed we obtain the ``likewise'' clause.
\end{pfcl}

Let us write $\overline{H_{t_0}}$ for the image of $H_{t_0}$ in $\PSL_2( \m R) \times \PSL_2( \m R)$.  Note that 
$\overline{H_{t_0}}$ is the setwise stabilizer of $\widetilde{(\Gamma_{t_0})}$ in $\PSL_2( \m R) \times \PSL_2( \m R)$.
From Claim~\ref{rationalstab} and the fact that the image of $\GL_2^+(\QQ)$ is dense in $\PSL_2 (\m R)$ we see that the 
projection of $\overline{H_{t_0}}$ to each $\PSL_2( \m R)$ is surjective.  Since $\widetilde{\Gamma_{t_0}}$
is a finite-to-finite correspondence between $J_2(\fh)$ and $J_2(\fh)$, necessarily $\overline{H_{t_0}}$ 
is a proper subgroup of $\PSL_2( \m R) \times \PSL_2( \m R)$.  Arguing as in~\cite{Kolchinagad} 
we see that $\overline{H_{t_0}}$ is the graph of an automorphism of $\PSL_2( \m R)$.  Since every 
automorphism of $\PSL_2( \m R)$ is inner, we can find some $g \in \PSL_2( \m R)$ for which 
$\overline{H_{t_0}} = \{ (\gamma, \gamma^g) : \gamma \in \PSL_2(\m R) \}$.

Let us consider some point $(x,y) \in \widetilde{\Gamma _{t_0} }$.  Write $\pi(x,y) =: (x_0,y_0) 
\in \pi(\widetilde{\Gamma _{t_0}}) \subseteq (\fh \times \fh)$.  Let $k \in \PSL_2(\m R)$ with 
$k \cdot x_0 = y_0$.  We will show that we may take $k = g$.    

Let us write the stabilizer of $x$ in $\PSL_2(\m R)$ as $S_x$.   Note that if $h \in S_x$, 
then because $(h,h^g) \in H$, we have  $(x, h^g \cdot y) = (h,h^g) \cdot (x,y) \in \widetilde{\Gamma_{t_0}} $.
Since $\widetilde{(\Gamma_{t_0})}$ is a
finite-to-finite correspondence, the fiber of $\widetilde{(\Gamma_{t_0})}$ above $x$ is finite.  Hence, 
the orbit $S_x^g \cdot y$ is finite.  That is, the group $S_x^g \cap S_y$ has finite 
index in $S_x^g$, but this last group is a connected group so it follows  that $S_x^g \leq S_y$.  
Projecting $\pi: J_2 (\fh ) \to \fh$ we conclude that $S_{x_0}^g = \pi(S_x^g) \leq  \pi(S_y) = 
S_{y_0} = S_{x_0}^k$.   Since the group $S_x$ is self-normalizing, we conclude that 
$g S_{x_0} = k S_{x_0}$.  That is, it would have been possible to take $k = g$.
Thus, $\pi(\widetilde{(\Gamma_{t_0})})$ is the graph of the action of $g$ on $\fh$.

Since $J_2(j \times j) (\widetilde{(\Gamma_{t_0})}) = \Gamma_{t_0}$ is an algebraic variety, necessarily 
$g \in \GL_2^+(\QQ)$.  As there are only countably many Hecke relations, it follows that 
one must hold for the generic fiber of $\Gamma$. 
This finishes the proof of the theorem, because it is a contradiction to $b_s \neq c_s$. 
\end{proof}

\section{Effective finiteness results}

In this section, we apply our earlier work on the strong minimality of the
differential equation satisfied by the $j$-function to compute explicit upper
bounds on certain intersections of isogeny classes of products of elliptic curves
with algebraic varieties.   As we explained in the introduction to this paper,
the questions we address were posed to us by Mazur in connection with theorems of Orr
in line with the Zilber-Pink conjectures.   In~\cite{orr2012families}, Orr proves the
following theorem.

\begin{thm}[Orr, Theorem 1.3] \label{thmorr}
Let $\Lambda$ be the isogeny class of a point $s \in {\mathcal A}_g(\CC)$, the
moduli space of principally polarized abelian varieties of dimension $g$.  Let
$Z$ be an irreducible closed subvariety of ${\mathcal A}_g$ such that $Z \cap \Lambda$
is Zariski dense in $Z$ and $\dim(Z) > 0$.

Then there is a special subvariety $S \subseteq {\mathcal A}_g$ which is isomorphic to
a product of Shimura varieties $S_1 \times S_2$ with $\dim S_1 > 0$ and such that
$$Z = S_1 \times Z' \subseteq S$$
for some irreducible closed $Z' \subseteq S_2$.
\end{thm}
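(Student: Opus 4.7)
The plan is to follow the Pila--Zannier strategy adapted to the Shimura variety $\mathcal{A}_g$ and its uniformization $\pi\colon \mathfrak{H}_g \to \mathcal{A}_g(\CC)$ by the Siegel upper half space. First I would fix a fundamental set $\mathcal{F} \subseteq \mathfrak{H}_g$ for the action of $\operatorname{Sp}_{2g}(\ZZ)$ which is semi-algebraic and definable in the o-minimal structure $\RR_{\operatorname{an},\exp}$ (so that Peterzil--Starchenko's result that $\pi|_{\mathcal F}$ is definable applies). Choose a lift $\tilde{s} \in \mathcal{F}$ of $s$. Points of the isogeny class $\Lambda$ are then precisely the images under $\pi$ of points of the form $g\cdot \tilde{s}$ where $g$ ranges over the $\QQ$-points of $G = \operatorname{GSp}_{2g}$; one should think of Hecke translates of $\tilde s$ falling into $\mathcal{F}$ and mapping down into $Z$.

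Next I would introduce the definable set
\[
Y \;=\; \{\, g \in G(\RR) \;:\; g \cdot \tilde{s} \in \mathcal{F},\ \pi(g \cdot \tilde{s}) \in Z \,\}
\]
so that rational points of $Y$ of bounded height parametrize isogenies of bounded degree sending $s$ into $Z$. The central quantitative input is a pair of counterposed bounds: polynomial upper bounds on the height of a Hecke element $g$ in terms of the degree of the corresponding isogeny, and polynomial lower bounds on the cardinality of the Galois orbit of an isogenous abelian variety (coming from Masser--Wüstholz isogeny estimates together with the Faltings height growth under isogeny). Since $Z \cap \Lambda$ is Zariski dense in $Z$, these two bounds together force $Y$ to contain more rational points of height $\leq T$ than any polynomial in $\log T$ can bound for large $T$.

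At this point the Pila--Wilkie counting theorem applies and forces $Y$ to contain a connected semi-algebraic set of positive dimension. Push this forward along $\pi$: the image is contained in $Z$, and applying the hyperbolic Ax--Lindemann--Weierstrass theorem for $\mathcal{A}_g$ (Pila for $\mathcal{A}_g$ in small genus; Klingler--Ullmo--Yafaev in general) converts this semi-algebraic set in $\mathfrak{H}_g$ into a weakly special subvariety $S_1 \subseteq Z$ of positive dimension. One then needs to upgrade ``$Z$ contains $S_1$'' to the claimed product structure $Z = S_1 \times Z'$; this step proceeds by considering the Shimura subvariety $S$ generated by $S_1$, noting that $S$ splits as a product $S_1 \times S_2$ of Shimura varieties, and using that $\Lambda$ is stable under the Hecke action along the $S_1$ factor, so that $Z$ itself must be invariant under translation along $S_1$.

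The main obstacle is the Galois-orbit lower bound: controlling how rapidly $[\QQ(s'):\QQ]$ grows as $s' \in \Lambda$ ranges over isogenous points requires the deep isogeny estimates of Masser--Wüstholz together with suitable bounds on the variation of the Faltings height under isogeny, and it is precisely this step that makes the theorem ineffective in the number-theoretic setting (and which the present paper sidesteps, for transcendental $s$, by replacing $\Lambda$ with the Kolchin-closed set $\chi^{-1}(0)^n$). A secondary difficulty is choosing the fundamental set $\mathcal F$ definably and controlling the heights of the Hecke representatives one obtains; for $g \geq 2$ this is where the definability of the period map in $\RR_{\operatorname{an},\exp}$ is essential.
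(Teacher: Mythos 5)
This theorem is not proved in the paper at all: it is Orr's Theorem 1.3, imported verbatim from \cite{orr2012families} and used as a black box, and the surrounding discussion explicitly stresses that Orr's proof is ineffective and that the present paper's contribution is to circumvent it for transcendental points via differential algebra. So there is no in-paper argument to compare yours against; the correct ``proof'' in context is the citation. That said, your sketch does identify the genuine architecture of Orr's argument (definability of the uniformization on a fundamental set \`a la Peterzil--Starchenko, Pila--Wilkie counting, isogeny/height estimates, hyperbolic Ax--Lindemann, then the structure theory of weakly special subvarieties), so as a reconstruction of the external proof it is pointed in the right direction.

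Taken as a proof, however, it has concrete gaps. First, the quantitative pivot is misstated: Pila--Wilkie bounds the number of rational points of height at most $T$ on the transcendental part of a definable set by $C_\epsilon T^\epsilon$ for every $\epsilon>0$, so to force a positive-dimensional semi-algebraic subset you must exhibit $\gg T^{\delta}$ rational points for some fixed $\delta>0$; exhibiting ``more than any polynomial in $\log T$'' does not contradict $O(T^\epsilon)$ and yields nothing. Second, the mechanism you invoke for the lower bound --- Galois-orbit growth --- is not available as stated, since $s\in\mathcal{A}_g(\CC)$ is an arbitrary complex point, not assumed algebraic; the actual lower bound comes from counting the isogenies themselves, via Masser--W\"ustholz isogeny estimates extended to finitely generated fields of definition by specialization, together with a polynomial comparison between the degree of an isogeny and the height of the corresponding element of $\operatorname{GSp}_{2g}(\QQ)$ --- and this is where most of the work (and the ineffectivity) lives. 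Third, the final step, upgrading ``$Z$ contains a positive-dimensional weakly special $S_1$'' to the product decomposition $Z=S_1\times Z'$, is a substantive argument requiring the classification of weakly special subvarieties and an invariance/induction argument, not the one-sentence remark you give. None of this affects the paper itself, which only needs the statement, not its proof.
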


In the case that $Z$ is a curve, Theorem~\ref{thmorr} implies that $Z$ must be a
weakly special variety.  We refer the reader to the original paper for a discussion of
special and weakly special varieties, but note that if $S \subseteq {\mathcal A}_g$ is the
subvariety corresponding to the abelian varieties expressible as a product of $g$ elliptic
curves, then identifying $S$ with ${\mathbb A}^n$,
the special subvarieties of $S$ are the components of varieties defined by equations of the
form $F_N(x_i,x_j) = 0$ where $F_N$ is a modular polynomial and $1 \leq i \leq j \leq n$.
The weakly special varieties are obtained by allowing equations of the form $x_k = \zeta$ for
some $\zeta \in {\mathbb A}^1(\CC)$.

Taking the contrapositive of Theorem~\ref{thmorr}, again for curves, one sees that if
$Z \subseteq {\mathcal A}_g$ is an algebraic curve which is not weakly special, then
$Z \cap \Lambda$ is finite.   One might ask how large is this finite set.  Since
Orr's argument depends on ineffective constants coming from the Pila-Wilkie o-minimal
counting theorem, it does not yield a method to compute a bound on $Z \cap \Lambda$.
Using differential algebraic methods, we can find explicit upper bounds depending only on
geometric data, but we must restrict our attention to transcendental points. 

In this section, we will begin with a specific example of an application of the work in the previous sections, 
before giving a general theorem. The general idea we are following is a familiar one in the model theory of fields 
(e.g. \cite{HrML, buiumIII}). Take a set $A$ which has some arithmetic meaning 
(in our case, isogeny classes viewed in a moduli space); we wish to study the intersection of $A$ with varieties. 
Instead of considering the intersections directly, take the closure $\overline{A}^{\operatorname{Kol}}$ 
of  $A$ in the Kolchin topology, and study intersections of $\overline{A}^{\operatorname{Kol}}$ with varieties. 
The sacrifice which one makes in moving to the Kolchin closure is offset by a reasonable understanding of the 
properties of the closure, which we accomplished in the previous sections. The advantage is that the object 
in question is now a variety in the sense of differential algebraic geometry, so we can apply tools and 
uniformities from the general theory.

The sort of problem which we are attacking has, on the face of it, nothing to do with differential algebra. 
This allows us a good deal of freedom in equipping the fields over which we are working with a derivation. 
Equip $\m C$ with a derivation $\partial$ so that that $(\m C, \partial)$ is differentially
closed and the field of constants of $(\m C, \partial)$ is $\m Q^{\operatorname{alg}}$. 
Given a particular isogeny class viewed in the moduli space of elliptic curves, in order to 
apply the results of the previous sections, we must know that the elements in the class satisfy 
the differential equation $\chi (x) = a$ for some $a$ in the differential field. 
This is possible precisely when the element is transcendental. 

For background on the theory of moduli spaces of elliptic curves, we refer to~\cite{milneMF}. 

One key tool is an effective finiteness theorem of Hrushovski and Pillay from~\cite{HPnfcp} and Le\'on-Sanchez and Freitag~\cite{JOnfcp}.

\begin{thm}\label{HPnfcp} 
Let $X$ be a closed subvariety of ${\mathbb A}^n$, with $\dim(X)=m$, and let $S, T$ be closed subvarieties 
(not necessarily irreducible) of
$\tau_\ell {\mathbb A}^n$.   Then the degree of the Zariski closure of $(X,S \smallsetminus T)^\sharp(\m C, \partial)$
is at most $\deg(X)^{\ell 2^{m \ell}} \deg(S)^{2^{m \ell} - 1}$.   In particular, if 
$(X,S \smallsetminus T)^\sharp(\m C, \partial)$ is a finite set, this expression bounds the number of 
points in that set. 
\end{thm}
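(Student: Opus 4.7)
The plan is to analyze the Zariski closure $Y \subseteq X$ of the sharp set $(X, S \smallsetminus T)^\sharp$ by exploiting the compatibility between $\nabla_\ell$ and prolongation spaces. For any $a$ in the sharp set one has $\nabla_\ell(a) \in S \cap \tau_\ell X$, and if $Y$ denotes the Zariski closure of the sharp set then $\nabla_\ell(Y(\mathbb{C})) \subseteq S \cap \tau_\ell Y$ since the defining equations of $Y$ and their formal derivatives cut out $\tau_\ell Y$ inside $\tau_\ell \mathbb{A}^n$. I would proceed by induction on $m = \dim(X)$, with an auxiliary iteration reducing to strictly smaller subvarieties.

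For the base case $m = 0$, $X(\mathbb{C})$ is a finite set of cardinality at most $\deg(X)$, so any $Y \subseteq X$ satisfies $\deg(Y) \leq \deg(X) \leq \deg(X)^{\ell} \deg(S)^{0}$, matching the claimed bound with $m=0$. For the inductive step I would split into cases. If the sharp set is Zariski dense in $X$ (the dimension-preserving case), then $S \cap \tau_\ell X$ contains a component dominating $X$ under the projection $\pi\colon \tau_\ell \mathbb{A}^n \to \mathbb{A}^n$, and so $Y = X$ with $\deg(Y) = \deg(X)$. Otherwise $Y$ is contained in the proper subvariety $X \cap \pi(\tau_\ell X \cap S)$; Bezout-type bounds in $\tau_\ell \mathbb{A}^n$ control the degree of this intersection by $\deg(\tau_\ell X) \cdot \deg(S)$, and classical estimates bound $\deg(\tau_\ell X)$ in terms of $\deg(X)$ and $\ell$ (by a polynomial expression, using that the prolongation is cut out by the equations of $X$ together with their iterated formal derivatives). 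The irreducible components of this intersection have algebraic dimension strictly less than $m$, and I would apply the inductive hypothesis to each.

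The recursion that results roughly doubles the exponent each time we drop a dimension: the input-degree bound to the inductive call is (up to a factor of $\deg(S)$) the degree of $\pi(\tau_\ell X \cap S)$, and the output bound at dimension $m-1$ raises this to an exponent of the same form with $m$ replaced by $m-1$. Unrolling through the at-most $m\ell$ possible descent steps (the $\ell$ arising from the prolongation depth entering the degree of $\tau_\ell X$) produces the double-exponential $2^{m\ell}$ in the claimed bound, together with the $\deg(S)^{2^{m\ell} - 1}$ accounting for one Bezout factor at each step of the recursion.

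The main obstacle is tracking the exact exponents through this recursion so that $2^{m\ell}$ emerges precisely; errors of a constant factor in the intersection-theoretic accounting can distort the exponent in $\deg(X)$ from $\ell \cdot 2^{m\ell}$ to something larger, and the interaction between the prolongation degree growth and the dimension drop requires careful bookkeeping. A secondary technical issue is the role of the excised set $T$: removing $T$ can only shrink the sharp set, so the Zariski closure $Y$ satisfies the same degree bound as for $(X,S)^\sharp$, but one must verify that the inductive descent respects this (in particular, that the relevant components of $\pi(\tau_\ell X \cap S)$ are not entirely absorbed into $T$ in a way that undermines the argument). Both issues are handled with care in Hrushovski-Pillay~\cite{HPnfcp} and with the sharpened effective estimates of Le\'on-Sanchez and Freitag~\cite{JOnfcp}, whose arguments I would follow.
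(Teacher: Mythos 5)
First, a point of comparison: the paper does not prove this theorem at all. It is imported verbatim from Hrushovski--Pillay \cite{HPnfcp} as corrected and generalized by Le\'on-Sanchez and Freitag \cite{JOnfcp}, and the only in-paper content is the remark immediately following the statement, which describes the gaps in the original proof. So there is no internal proof to match your sketch against; the relevant question is whether your outline would actually yield the stated bound.

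As written it would not, for two reasons. The first is that you defer exactly the content of the theorem: the claimed bound is the specific expression $\deg(X)^{\ell 2^{m\ell}}\deg(S)^{2^{m\ell}-1}$, and your recursion is never set up precisely enough to unroll into those exponents --- you acknowledge this yourself as the ``main obstacle'' and then fall back on the citations. The second, more substantive, issue is that your descent step rests on controlling $\deg(\tau_\ell X)$ and the components of $S\cap\tau_\ell X$ by ``classical estimates'' on the prolongation as cut out by the equations of $X$ and their formal derivatives. This is precisely where the original Hrushovski--Pillay argument fails, as the paper's own remark explains: the prolongation $\tau_\ell X$ (equivalently $B_\ell(X)$ in their notation) need \emph{not} be equidimensional of dimension $\dim(X)(\ell+1)$, and the nabla-image $\nabla_\ell(X)$ need not sit inside a component of the expected dimension; the dimension of the prolongation is governed by singularity invariants of $X$ such as the log canonical threshold. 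A naive Bezout computation on all of $\tau_\ell X$ therefore does not deliver the inductive degree bound, and the repair in \cite{JOnfcp} requires the machinery of prolongation spaces from \cite{MSJETS} to isolate the correct component before intersecting with $S$. Your handling of $T$ (removal only shrinks the set, so it can be ignored for the closure bound) is fine, and your base case and the dense/non-dense dichotomy are the right skeleton, but without addressing the non-equidimensionality of $\tau_\ell X$ and without carrying out the exponent bookkeeping, the proposal is a pointer to the literature rather than a proof.
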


\begin{rem} A noneffective proof of the non-finite cover property in differential fields, that is, the
assertion that for a differentially constructible family of differentially constructible set $\{ X_b \}_{b \in B}$
over a differentially closed field ${\mathbb U}$ of characteristic zero there is a bound $N$ so that for any $b$ is $X_b({\mathbb U})$ is finite,
then $|X_b({\mathbb U}))| \leq N$,  is in \cite[Dave Marker's differential fields article]{MMP}.
\end{rem}

\begin{rem}
There are some gaps in the proof of the theorem in~\cite{HPnfcp}.  In particular,
there is a false (implicit) claim, which results in some additional complications 
in the proof.  We follow the notation and numbering of that paper during this remark.  One of the errors comes
from their assumption \cite[Lemma 3.6 (2)]{HPnfcp} that $B_\ell (X)$ is a component of the $\ell^{\text{th}}$
 prolongation space of the variety 
$X$; following their proof, the justification is that $B_\ell(X)$ is a subset of the $\ell^{\text{th}}$
prolongation and $B_\ell(X)$ has 
dimension $\dim(X) (\ell+1)$. Of course, this justification would only be valid if one assumed that the $\ell^{\text{th}}$ prolongation had 
dimension $\dim(X) (\ell+1)$. 

However, in general $B_\ell(X)$ can have
smaller dimension than the $\ell^{\text{th}}$ prolongation 
of $X$; when $X$ is defined over the constants, the assumption is equivalent to assuming
that the $\ell^{\text{th}}$ prolongation is equidimensional, which is not true in general. The dimension of the 
$\ell^{\text{th}}$ prolongation is related to
some basic invariants of the singularities on $X$, in this case, the log canonical threshold~\cite{logcanon}. 

This small assumption and another small gap in the proof results in additional complications.  These gaps are corrected and the problem is generalized to the partial differential context in~\cite{JOnfcp} 
using the theory of prolongation spaces~\cite{MSJETS}. 
\end{rem}

\begin{convention}
When we compute degrees of closed subsets of affine space 
in what follows, we take the \emph{definition} of degree to be the sum of the geometric degrees of the irreducible components. 
\end{convention}

Our second ingredient is the fact that the fibers of $\chi$ are invariant 
under isogeny.  In fact, by Theorem~\ref{othersm}, we know that for any nonconstant $a$, 
the equation $\chi (x) = \chi(a)$ holds on the isogeny class of $a$. (In fact, by a theorem of Buium~\cite{buiumIII}, it 
defines the Kolchin closure of the isogeny class of $a$.) The third ingredient is our characterization algebraic relations across the fibers of $\chi$ from the previous Sections~\ref{orthothm}.  We begin with a specific example.

\subsection{Automorphisms of the Riemann sphere}
The ingredients and general strategy are essentially the same. 
Fix some non-identity element of $GL_2(\CC)$, which we will write as $\alpha = \left( \begin{array}{cc} a & b \\ c & d \end{array} \right)$. 
Throughout this section, we let $E_x$ denote an elliptic curve with $j$-invariant $x$ and we write 
$x \sim y$ to mean that the elliptic curves $E_x$ and $E_y$ are isogenous.

Fix $\tau$ transcendental. The goal here is to establish an upper bound on the number of elliptic curves $E_\eta$
such that $E_\tau \sim E_\eta$ and $E_{\alpha \cdot \tau } \sim E_ {\alpha \cdot \eta }$. 

Unless $\alpha \cdot \tau$ is constant (a case we will consider separately),
since the fibres of $\chi$ are closed under isogeny, this set is contained in the following set:

$$S=\{ z \in {\m A}^1(\CC)  ~\vert~ \chi(z) = \chi(\tau) \text{ and } \chi ( \alpha \cdot z ) ) = \chi (\alpha  \cdot \tau ) \} \text{ }$$

which is the projection to the first coordinate of the intersection of the graph of $\alpha$ (regarded 
as  linear fractional transformation) with $\chi^{-1} (\chi(\tau)) \times \chi^{-1} (\chi(\alpha \cdot \tau))$.  
From Theorem~\ref{orthothm}, because the graph $\alpha$ is not a modular relation, we know that this intersection is finite. 
In the case that $\alpha \cdot \tau$ is constant, we observe that the isogeny class of $\alpha \cdot \tau$ 
is contained in the set of constants, which is itself a stronlgy minimal set orthogonal to $\chi^{-1} (\chi(\tau))$ because
the former is nontrivial while the latter is trivial.  Hence, for the same reason this set is finite. 

Next, we apply Theorem~\ref{HPnfcp}.  We explain the details in the case where $\alpha \cdot \tau$ is 
transcendental. The other case is even easier.

Let $X = \m A^1$ and let $\ell=3$, and write $B_3 ( \m A^1 ) $ in coordinates $(z, \dot z , \ddot z, \dddot z)$.
Let $S$ be given the equations $\chi(z) = \chi(\tau)$ and $\chi (\alpha \cdot z )  = \chi ( \alpha \cdot \tau)$ 
re-expressed as algebraic equations in $z, \dot z, \ddot z, \dddot z$.
By Bezout's theorem, $S$ is of degree at most $36$. By Theorem~\ref{HPnfcp}, 
$|(X, S )^ \sharp| \leq 36^7$. Hence, given an elliptic curve $E_\tau$ with transcendental $j$-invariant, 
there are at most $36^7$ elliptic curves $E_\eta$ in the isogeny class of $E_\tau$ for which 
$E_{\alpha \cdot \eta }$ is in the isogeny class of $E_ {\alpha \cdot \tau}$.

\begin{rem} We will state our results in full generality, but we should point out that in certain special cases, better bounds are available via comparably elementary reasoning.
\end{rem}

\begin{prop} Let $C \subseteq {\mathbb A}^2$ be some non-weakly-special irreducible curve defined over ${\mathbb Q}^{\text{alg}}$ 
and let $P = (a,b)$ be some transcendental point. Then there can be at most one point in the isogeny class of $P$ on $C$. 
\end{prop}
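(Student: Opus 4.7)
The plan is to argue by contradiction: assume there exist two distinct points $P_1 = (a_1, b_1)$ and $P_2 = (a_2, b_2)$ of $C$ in the isogeny class of $P$, and deduce that $C$ must be weakly special.

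The isogeny relations translate into modular polynomial identities: there exist positive integers $N, M$, not both equal to $1$, with $F_N(a_1, a_2) = 0$ and $F_M(b_1, b_2) = 0$, so the tuple $(a_1, b_1, a_2, b_2)$ lies on the ${\mathbb Q}^{\text{alg}}$-subvariety
\[
V_{N, M} := \{(x_1, y_1, x_2, y_2) \in \mathbb A^4 : C(x_1, y_1) = 0,\ C(x_2, y_2) = 0,\ F_N(x_1, x_2) = 0,\ F_M(y_1, y_2) = 0\}.
\]
All four coordinates are transcendental (they lie in $\overline{{\mathbb Q}^{\text{alg}}(a)} \setminus {\mathbb Q}^{\text{alg}}$, since any isogenous or $C$-related point of a transcendental point is still transcendental), so the tuple is not in $V_{N, M}({\mathbb Q}^{\text{alg}})$ and must sit on a positive-dimensional ${\mathbb Q}^{\text{alg}}$-irreducible component $W \subseteq V_{N, M}$.

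I would then lift to $\fh^4$ via the covering $j^4$ and apply Pila's modular Ax-Lindemann-Weierstrass theorem. Picking $\tau_a, \tau_b \in \fh$ with $j(\tau_a) = a$, $j(\tau_b) = b$ and $(\gamma, \delta) \in \GL_2^+({\mathbb Q})^2 \setminus \{(\mathrm{id}, \mathrm{id})\}$ realising the Hecke relations, the lifted tuple $(\tau_a, \tau_b, \gamma \tau_a, \delta \tau_b)$ lies on the $2$-dimensional complex-algebraic subvariety $\Lambda_{\gamma, \delta} := \{(u_1, u_2, \gamma u_1, \delta u_2) : u_1, u_2 \in \fh\} \subseteq \fh^4$ and maps under $j^4$ into $W$. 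By AL-W, our tuple sits on a positive-dimensional weakly-special subvariety $\tilde W_0 \subseteq \Lambda_{\gamma, \delta}$ whose $j^4$-image lies in $W$; positive-dimensionality is forced because $\tau_a, \tau_b$ are non-CM.

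Finally, since $\Lambda_{\gamma, \delta}$ is parameterised by $(u_1, u_2) \in \fh^2$ and $\tilde W_0$ has codimension one in it, $\tilde W_0$ is cut out by one additional weakly-special condition on $(u_1, u_2)$: either $u_1$ or $u_2$ is fixed to a point, or $u_2 = g u_1$ for some $g \in \GL_2^+({\mathbb Q})$. The first two cases force $C$ to contain a vertical or horizontal special line, and the third forces $C$ to contain the modular curve $V(F_{|g|})$; irreducibility of $C$ then makes $C$ itself equal to one of these weakly-special curves, contradicting the hypothesis. The main obstacle is the precise invocation of Pila's modular AL-W in the previous paragraph---namely, confirming that the maximal weakly-special locus of $\Lambda_{\gamma, \delta}$ with $j^4$-image in $W$ really contains our specific transcendental tuple as a positive-dimensional component---after which the case analysis is a routine check.
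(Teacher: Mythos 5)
Your reduction runs parallel to the paper's own: you place the hypothetical pair of isogenous points on a ${\mathbb Q}^{\text{alg}}$-variety (your $V_{N,M}$ is just the graph, spread out in ${\mathbb A}^4$, of the paper's intersection $C\cap T_{n,m}(C)$ of $C$ with its Hecke transform), you observe that a transcendental point cannot lie on a zero-dimensional ${\mathbb Q}^{\text{alg}}$-component, and you therefore need to rule out a positive-dimensional component $W$ through the point. The paper does exactly this in ${\mathbb A}^2$ and disposes of the positive-dimensional case in one line, by asserting that a non-weakly-special irreducible curve is not a component of its own transform under a nontrivial Hecke correspondence.

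The genuine gap is in your attempt to supply that fact from Ax--Lindemann--Weierstrass, and it sits exactly where you flag it. Pila's modular ALW is a statement about \emph{maximal algebraic} subvarieties of $j^{-4}(W)$: \emph{if} a positive-dimensional algebraic subvariety of $\fh^4$ lies inside $j^{-4}(W)$, \emph{then} it is weakly special. It does not produce such a subvariety through your point $(\tau_{a_1},\tau_{b_1},\gamma\tau_{a_1},\delta\tau_{b_1})$. The set $j^{-4}(W)\cap\Lambda_{\gamma,\delta}$ is a positive-dimensional \emph{analytic} subset of $\Lambda_{\gamma,\delta}\cong\fh^2$, and for a general positive-dimensional $W$ it contains no positive-dimensional algebraic subvariety at all --- that is precisely what ALW forbids when $W$ is not weakly special. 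Saying the $\tau$'s are non-CM only excludes zero-dimensional special points; it does not force a positive-dimensional weakly special variety to exist inside the preimage. To close the argument you need the nontrivial input that an irreducible non-weakly-special curve over ${\mathbb Q}^{\text{alg}}$ cannot be contained in its transform under $T_N\times T_M$ with $(N,M)\neq(1,1)$ --- for instance by iterating the correspondence to make $C\cap\operatorname{Iso}(P)$ Zariski dense in $C$ and invoking Orr's theorem as quoted in the paper, or by appealing to the orthogonality and triviality results established in the earlier sections. Granting that, your concluding case analysis (vertical or horizontal line, or modular curve) is fine, but at that point you have reproduced the paper's argument rather than given an independent ALW proof.
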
 
\begin{proof}
Without loss of generality, we may assume that $P \in C$.   
Suppose that $(a',b') \in C$ is distinct from $(a,b)$ but isogenous to $(a,b)$ 
via isogenies of degrees $n$ and $m$, respectively. 
Then $P$ would belong to the intersection of $C$ with the transform of $C$ by the correspondence 
$\{ (x,y), (u,v) : F_n(x,u) = 0 ~\&~ F_m(y,v) = 0 \}$.   As $C$ is non-weakly special, this intersection is 
is zero dimensional and defined over ${\m Q}^\text{alg}$, contradicting the presence of $P$ on the intersection. 
\end{proof}

\subsection{A general finiteness result}
The following general result uses similar tools to the examples of the previous two subsections. 
For the remainder of the paper, we will be considering Kolchin closed subvarieties 
$V$ of $\prod_{i=1}^n \chi ^{-1} (a_i)$, so we may assume, without loss of generality, 
that $V$ is written as $\nabla_3^{-1} S \cap \prod_{i=1}^n \chi ^{-1} (a_i)$
for an an algebraic subvariety $S$ of $\tau_3 (\m A^n)$. 
For the purposes of stating the theorem, we will \emph{define} $\deg (V) := \deg(S)$. 

\begin{thm} Let $V \subseteq \m A^n$ be a Kolchin closed subset. Let $\bar a$ be an $n$-tuple 
of transcendental points. Let 
$$\operatorname{Iso}(\bar a) := \{ (b_1,\ldots,b_n) \in {\mathbb C}^n ~\vert~ a_i \sim b_i \text{ for } i \leq n\}$$
be the isogeny class of $\bar a$.  Let $W$ be the Zariski closure of $V \cap \operatorname{Iso}(\bar a)$.

Then 
\begin{enumerate} 
\item $W$ is a finite union of weakly special subvarieties of $\m A^n$. 
\item The degree of $W$ is bounded by 
$\left( 6^n \cdot \deg (V) \right)^7 \text{ .}$
\item $V \cap \text{Iso} (\bar a) = W \cap \text{Iso} (\bar a)$. 
\end{enumerate} 
\end{thm}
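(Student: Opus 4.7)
The plan is to replace the arithmetic set $\operatorname{Iso}(\bar a)$ by a Kolchin-closed set built from fibers of $\chi$, describe its irreducible components using the structure theorems of the preceding sections, and then invoke Theorem~\ref{HPnfcp} to obtain the degree bound.

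First I equip $\CC$ with the derivation $\partial$ for which $(\CC,\partial)$ is differentially closed with field of constants $\QQ^{\text{alg}}$; then $\chi(a_i)$ is defined and, since each $a_i$ is transcendental, the theorem of Buium cited earlier identifies the Kolchin closure of $\operatorname{Iso}(a_i)$ with the fiber $\chi^{-1}(\chi(a_i))$. Consequently $\operatorname{Iso}(\bar a) \subseteq \prod_{i=1}^n \chi^{-1}(\chi(a_i))$, and I set $Y := V \cap \prod_{i=1}^n \chi^{-1}(\chi(a_i))$, a Kolchin-closed subset of $\m A^n$ containing $V \cap \operatorname{Iso}(\bar a)$.

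For (1), I analyze the irreducible Kolchin components of $Y$. By Proposition~\ref{othersm} each fiber is strongly minimal with trivial forking geometry, with interalgebraicity within a fiber witnessed only by modular polynomials $F_N$; by Theorem~\ref{orthothm} distinct fibers are pairwise orthogonal. In a product of pairwise-orthogonal strongly-minimal trivial sets, every irreducible Kolchin component of a closed subset is obtained by specifying, for each coordinate, either a value algebraic over the parameters or a Hecke relation $F_N(x_i,x_j)=0$ with another coordinate lying in the same fiber, with all remaining coordinates free. The Zariski closure of such a component is precisely a weakly special subvariety of $\m A^n$, proving (1). For (3), each irreducible weakly special $W_i \subseteq W$ arises as the Zariski closure of a set $V \cap \operatorname{Iso}(\bar a) \cap W_i$ that is Kolchin-dense in $W_i \cap \prod_j \chi^{-1}(\chi(a_j))$ (the latter being irreducible Kolchin-closed by strong minimality of each free factor); since $V$ is Kolchin closed, it follows that $W_i \cap \prod_j \chi^{-1}(\chi(a_j)) \subseteq V$, and in particular $W_i \cap \operatorname{Iso}(\bar a) \subseteq V$, which gives the nontrivial inclusion in (3).

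For the quantitative bound (2), I realize $Y$ as $\nabla_3^{-1}(S')$ where $S' \subseteq \tau_3 \m A^n$ is the intersection of the defining locus of $V$ (of degree $\deg V$) with the $n$ degree-$6$ prolongation hypersurfaces cutting out the fibers; B\'ezout gives $\deg S' \leq 6^n \cdot \deg V$. The exponent $7 = 2^{m\ell} - 1$ in the announced bound matches applying Theorem~\ref{HPnfcp} with prolongation order $\ell = 3$ and essential dimension $m = 1$. The reduction to $m = 1$ rests on the weakly special structure obtained in (1): within each Hecke-equivalence block of free coordinates of a component $W_i$, all coordinates are pairwise algebraic over any single representative, so after projecting onto such a representative one is effectively bounding a set in a one-dimensional ambient variety. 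The hard part of the plan is carrying out this projection uniformly across components so that the individual bounds aggregate to the stated $(6^n \cdot \deg V)^7$ on $\deg W$, rather than the much weaker doubly-exponential $(6^n \deg V)^{2^{3n}-1}$ one would obtain by naively taking $X = \m A^n$ in Theorem~\ref{HPnfcp}.
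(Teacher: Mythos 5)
Your reduction and the qualitative parts track the paper's proof closely: you pass from $\operatorname{Iso}(\bar a)$ to the Kolchin closed set $V \cap \prod_{i=1}^n \chi^{-1}(\chi(a_i))$, use Proposition~\ref{othersm} together with Theorem~\ref{orthothm} to see that this set decomposes into pieces whose Zariski closures are weakly special, and then want density of the isogeny class in any weakly special variety it meets in order to get (1) and (3). One point to note: that density statement is exactly what you never prove. As written, your argument for (1) only yields that $W$ is \emph{contained in} a finite union of weakly special varieties, and in (3) you simply assert that $V\cap\operatorname{Iso}(\bar a)\cap W_i$ is Kolchin dense in $W_i\cap\prod_j\chi^{-1}(\chi(a_j))$. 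The paper closes this by the (easy, but necessary) observation that if an irreducible weakly special variety $X$ meets $\operatorname{Iso}(\bar a)$ at all, then $X\cap\operatorname{Iso}(\bar a)$ is Zariski dense in $X$; some such argument must be supplied.

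The genuine gap is part (2). You correctly realize the relevant differential variety as $\nabla_3^{-1}(S')$ with $\deg S'\leq 6^n\cdot\deg V$ by B\'ezout, but you then note that applying Theorem~\ref{HPnfcp} with $X=\m A^n$ (so $m=n$, $\ell=3$) yields the exponent $2^{3n}-1$, propose to recover the exponent $7$ by projecting each Hecke equivalence block of coordinates onto a single representative, and explicitly concede that carrying out this projection and aggregating the per-block bounds into a bound on $\deg W$ is ``the hard part'' which you do not do. Thus the quantitative assertion, which is the point of the theorem, is not established by your proposal. Moreover, the sketched plan meets a real obstruction: projecting a block onto one coordinate forgets the modular polynomial $F_N$ cutting out that block, and the degrees of the $F_N$ that could occur are not bounded a priori, so bounding the one-variable projections does not by itself bound the degree of $W$. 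The paper's proof performs no projection at all: it writes the set as $(\m A^n,\Xi_{\bar a}\cap S)^\sharp$ with $\deg(\Xi_{\bar a})=6^n$, applies Theorem~\ref{HPnfcp} directly to read off the bound $\left(6^n\cdot\deg V\right)^7$, and then observes that $W$ is a union of components of that Zariski closure, so the same bound applies. Your discomfort with the dependence of the exponent on $\dim X$ in the statement of Theorem~\ref{HPnfcp} is a legitimate reading of that statement, but flagging the issue is not resolving it; a complete proof must either justify the application exactly as the paper makes it or actually carry out, uniformly in the components, the reduction you propose.
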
 

\begin{proof} 
Since the fibers of $\chi$ are closed under  isogeny,  $V \cap \operatorname{Iso}(\bar a)$
is contained in $V \cap \prod _{i=1}^ n \chi ^{-1} (a_i)$. 
By our orthogonality Theorem~\ref{orthothm} and our description of dependence within fibers from Theorem~\ref{othersm},
$V \cap \prod _{i=1}^ n \chi ^{-1} (a_i)$ is equal to $\bigcup_{j=1}^m X_j \cap \prod_{i=1}^n \chi^{-1}(a_i)$ 
where each $X_i$ is an irreducible weakly special variety.  
It is easy to see that if an irreducible weakly special variety $X$ meets 
$\operatorname{Iso}(\bar a)$ non-trivially, then $X \cap \operatorname{Iso}(\bar a)$ is Zariski dense in $X$.  Hence,
$W$, the Zariski closue of $V \cap \operatorname{Iso}(\bar a)$, is equal to $\bigcup_{j \in J} X_j$ for some $J \subseteq 
\{ 1, \ldots, m \}$.    
 
Write $V$ as $(\m A^n , \Xi_{\bar a} \cap S)^ \sharp$ where $\Xi_{\bar a}$ 
is given by the equations $\chi(x_j) = \chi(a_j)$ for $j \leq n$ in which $\chi(x_j)$ is re-expressed as 
a rational function in $x_j$, ${\dot x}_j$, ${\ddot x}_j$ and ${\dddot x}_j$.  
Examining the explicit equations for $\chi$, one sees that $\deg(\Xi_{\bar a}) = 6^n$.  
By computing degrees and applying Theorem~\ref{HPnfcp}, the degree of the Zariski closure of $V$ is bounded by 
$\left( 6^n \cdot \deg (S) \right) ^7$.  As $W$ is a union of some of the components of this Zariski closure, this number 
also bounds $\deg(W)$.
\end{proof} 

When $V$ is actually an algebraic variety, then we have $S = \tau_3 V$ so that $\deg(V)$ as defined 
with $\deg(S)$ is the same as $\deg(V)$ as usually defined. Thus, we obtain:

\begin{corr}  Let $V \subseteq \m A^n$ be a Zariski closed subset. Let $\bar a$ be an $n$-tuple of transcendental points.
Let $W$ denote the Zariski closure of $V \cap \operatorname{Iso} (\bar a)$.
Then $W$ is a finite union of weakly special subvarieties. The degree of $W$ is at most $\left( 6^n \cdot \deg (V) \right) ^7$.
\end{corr}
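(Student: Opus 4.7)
The plan is to apply the preceding theorem directly and reconcile the two degree conventions in play. Any Zariski closed subset of $\m A^n$ is automatically Kolchin closed, so the theorem yields at once that $W$ is a finite union of weakly special subvarieties of $\m A^n$ and that $\deg(W) \leq (6^n \cdot \deg(V))^7$, where on the right hand side $\deg(V)$ refers to the \emph{ad hoc} convention introduced in the theorem, namely $\deg(S)$ for any algebraic $S \subseteq \tau_3 \m A^n$ with $V = \nabla_3^{-1}(S) \cap \prod_{i=1}^n \chi^{-1}(a_i)$. What must be checked is that, when $V$ is genuinely Zariski closed, taking $S := \tau_3 V$ gives $\deg(S)$ equal to the usual geometric degree of $V$.

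The main verification is therefore the identity $\deg(\tau_3 V) = \deg(V)$ for a Zariski closed $V \subseteq \m A^n$, where both sides denote the sum of the geometric degrees of the irreducible components. Passing to components reduces to the case where $V$ is irreducible. One then realizes $\tau_3 V$ inside $\tau_3 \m A^n = \m A^{4n}$ by adjoining to the defining equations of $V$ their formal first, second, and third total derivatives, expressed in the prolongation coordinates. The projection $\tau_3 V \to V$ is an iterated linear fibration whose generic fiber is an affine space of fixed dimension, which forces $\tau_3 V$ to be irreducible of the same geometric degree as $V$. I expect this reconciliation of the two notions of degree to be the only substantive step; once it is in hand, the bound from the theorem transfers verbatim to the usual notion of degree, giving the corollary.
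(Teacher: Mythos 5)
Your route is the same as the paper's: the paper deduces the corollary from the theorem in a single sentence by taking $S = \tau_3 V$ and asserting that $\deg(\tau_3 V)$ coincides with the usual degree of $V$. You correctly isolate this reconciliation of degrees as the only substantive step, but the argument you give for it does not work. First, the projection $\tau_3 V \to V$ is an affine-space fibration only over the smooth locus of $V$: over singular points the fibres of the scheme cut out by the prolonged equations can jump in dimension, so the full prolongation need not be irreducible or equidimensional even when $V$ is irreducible. This is precisely the issue the paper itself raises in its remark on the gaps in \cite{HPnfcp}, where the ``main component'' $B_\ell(X)$ is distinguished from $\tau_\ell X$. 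Second, and more fundamentally, an affine-bundle structure does not control the degree of the embedded total space, since degree depends on the embedding and not just on the base. Already for the smooth conic $V = \{ y = x^2 \} \subseteq {\mathbb A}^2$, the first prolongation $\tau_1 V \subseteq {\mathbb A}^4$ is the graph of $(x,x_1) \mapsto (x^2, 2 x x_1)$; pulling a generic affine $2$-plane back along this parametrization yields two conics in the $(x,x_1)$-plane which always share the point $[0:1:0]$ at infinity, hence meet in exactly $3$ affine points, so $\deg(\tau_1 V) = 3 \neq 2 = \deg(V)$. Thus ``iterated linear fibration'' delivers neither the irreducibility nor the equality of degrees that you claim.

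To be fair, the paper supplies no justification for the identity beyond asserting it, so you are not overlooking an argument that the paper makes explicit; but your writeup has a genuine gap at exactly the step you flag as the main verification, and the example above shows the step cannot be repaired by the fibration argument. A correct treatment must either bound $\deg(\tau_3 V)$ in terms of $\deg(V)$ directly (for instance by a Bezout-type estimate on the prolonged defining equations, which would alter the constant in the final bound) or else work throughout with a notion of degree under which prolongation is genuinely well behaved.
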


\bibliography{Research1}{}
\bibliographystyle{plain}

\end{document}